\newcommand{\set}[1]{\left\{#1\right\}}
\newcommand{\mb}[1]{\mathbf {#1}}
\DeclareFixedFont{\ttb}{T1}{txtt}{bx}{n}{9} 
\DeclareFixedFont{\ttm}{T1}{txtt}{m}{n}{9}  
\definecolor{keyword}{rgb}{0,0,0.5}
\definecolor{emph}{rgb}{0.6,0,0}
\definecolor{string}{rgb}{0,0.5,0}
\theoremstyle{plain}
\newtheorem{mthm}{Main Theorem}
\newtheorem{thm}{Theorem}[section]
\newtheorem{conj}[thm]{Conjecture}
\newtheorem{prop}[thm]{Proposition}
\newtheorem{cor}[thm]{Corollary}
\newtheorem{lem}[thm]{Lemma}
\theoremstyle{definition}
\title{Towards a Combinatorial Description of the Intersection Product on $\mathbb{P}^{2[N]}$}
\author{Alexander Jon Stathis}
\date{}
\begin{document}

\maketitle

\begin{abstract}
We prove that there is an algorithm to compute the class of the intersection of the divisor of schemes incident to a fixed line with any other class of a basis of the Chow ring $A^*(\mathbb{P}^{2[N]})$ due to Mallavibarrena and Sols. This is progress towards a combinatorial description of the intersection product on the Hilbert scheme of points in the projective plane. 
\end{abstract}

\section{Introduction}

The Hilbert scheme $X^{[N]}$ of $N$ points on a smooth projective surface $X$ and its Chow ring $A(X^{[N]})$ have been extensively studied. Most notably, Nakajima and Grojnowski provided a geometric description of the sum $\oplus_{N \geq 0} H^*(X^{[N]})$ as a representation of a Heisenberg algebra \cite{N,N2,GROJNOWSKI}. However, relatively little is known about the structure of the individual Chow rings $A(X^{[N]})$. The Betti numbers were computed by G\"{o}ttsche \cite{GOTTSCHE}, and there are bases due to Nakajima and Grojnowski via their description. In the case that $X = \mathbb{P}^{2}$, the first known basis is due to Ellingsrud and Str\o mme (ES) \cite{ES,ES2}. In the cases when $N$ is small, the structure of the ring $A(\mathbb{P}^{2[N]})$ has been completely worked out in the ES basis \cite{EL,EL2,EL3}. Descriptions for the equivariant Chow rings have also been worked out \cite{EVAIN}. By degenerating the ES basis, Mallavibarrena and Sols (MS) provided an alternative basis which is better suited to the task of computation \cite{MS}. 

Each class in the MS basis is defined via incidence conditions indexed by a triple of partitions. A description of the MS basis can be found in Section \ref{sec:msbasis}. It is natural to ask if this combinatorial data encodes the intersection product similar to that of the partitions associated to Schubert cells of a Grassmannian. To that end, there is an algorithm to compute the intersection of any two complementary codimension elements of the MS basis \cite{ME}. It is necessarily more complicated than in the case of the Grassmannian, but the algorithm is positive and completely combinatorial. 

There is a natural divisor on $\mathbb{P}^{2[N]}$ which is the locus of schemes incident to a fixed general line -- it is the divisor associated to the triple $(0,(1),(1,\ldots,1))$ in the MS basis. The goal of this paper is to provide an explicit algorithm to compute the intersection of this divisor, which we call $H$, with any element of the MS basis. The main technical work is conducted in Section \ref{sec:degenerations} and Section \ref{sec:coefs}. The difficulty is the computation of the class of natural loci occurring in the intersection via successive degenerations (see Proposition \ref{prop:firstdeg} and Proposition \ref{prop:maindeg}), and determining the multiplicities of the components of the special fiber of these degenerations. These components are described by explicit partitions $\lambda$ obtained from the starting partition $\mb{m}$ by subtracting one from some of its entries, and the multiplicity of each component is determined as a number $c(\mb{m},\lambda)$ depending only on these partitions. An example of this algorithm can be found in Section \ref{sec:example}. In Section \ref{sec:remarks}, we state a purely combinatorial conjecture about the numbers $c(\mb{m},\lambda)$ which would result in an explicit description of the class of these natural loci. 

The other naturally occurring loci in the intersections can be either be immediately expressed in the MS basis or, when this is not the case, can be dealt with via a small modification of a degeneration due to Mallavibarrena and Sols (see Corollary \ref{cor:MS2}). This culminates in our main theorem.

\begin{mthm} \label{thm:main1} Let $H$ be the class of the locus of schemes incident to a fixed general line, and let $\sigma$ be an element of the MS basis. There is an explicit algorithm to compute the class in the MS basis of the intersection $H \cdot \sigma$. \end{mthm}

Finally, an implementation of the degenerations found in Section \ref{sec:degenerations} in Python 3 can be found on the author's website. 

We thank Izzet Coskun and Tim Ryan for many fruitful conversations during the investigation of these results. The author was partially supported by an NSF RTG DMS-1246844 grant during the completion of this work. 

\section{The Basis of Mallavibarrena and Sols} \label{sec:msbasis}

The purpose of this section is to define the basis of Mallavibarrena and Sols \cite{MS}.
 
Fix a triple of partitions $\alpha = (\mb{a},\mb{b},\mb{c})$ of nonnegative integers $A,B,C$ such that $A + B + C = N$, respectively. Let $r,s,t$ be the lengths of $\mb{a},\mb{b},\mb{c}$ and let $e,f,g$ be indices for $\mb{a},\mb{b},\mb{c}$, respectively. We will associate to this data a locally closed subset $U_{\alpha}$ of the open set in $\mathbb{P}^{2[N]}$ of reduced subschemes. The class $\sigma_{\alpha} = \left[\overline{U_{\alpha}}\right]$ of its closure in $\mathbb{P}^{2[N]}$ will be an element of the MS basis. 

Fix $r$ general lines $L_e$ with $r$ general points $P_e \in L_e$, $s$ general lines $M_f$, and a general point $Q$. The locus $U_{\alpha}$ is the locus of reduced schemes $Z$ in $\mathbb{P}^{2[N]}$ which can be written as the disjoint union of three subschemes $Z = Z_1 \cup Z_2 \cup Z_3$ such that:

\begin{enumerate}
\item $Z$ does not contain $Q$ or any point of intersection of any pair of fixed lines.
\item $Z_1$ contains each point $P_e$, and meets each line $L_e$ in $\mb{a}_e$ points.
\item $Z_2$ meets each line $M_f$ in $\mb{b}_f$ points.
\item $Z_3$ contains $t$ disjoint subschemes $Z_{3,g}$ consisting of $\mb{c}_g$ points collinear with $Q$. 
\end{enumerate}

The codimension of $U_{\alpha}$ in $\mathbb{P}^{2[N]}$ is $N + r - t$ so that the class $\sigma_{\alpha}$ is an element of $A^{N + r -t}(\mathbb{P}^{2[N]})$.

\begin{thm}[Mallavibarrena and Sols \cite{MS}] 
The collection of classes $\set{\sigma_{\alpha}}$ is a basis for the Chow ring $A\left(\mathbb{P}^{2[N]}\right)$ as $\alpha$ ranges over all triples of partitions of all triples of nonnegative integers whose sum is $N$. 
\end{thm}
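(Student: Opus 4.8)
The plan is to prove the statement in two stages: a dimension count showing that the number of admissible triples $\alpha$ equals the rank of the Chow ring, followed by a linear independence argument using the intersection pairing. Since $\mathbb{P}^{2[N]}$ is smooth and projective of dimension $2N$ and admits an affine cell decomposition (for instance the one of Ellingsrud and Str\o mme \cite{ES}, or the Bia\l ynicki--Birula decomposition coming from a general torus action on $\mathbb{P}^2$), its Chow groups are finitely generated and free, the cycle class map $A(\mathbb{P}^{2[N]}) \to H^*(\mathbb{P}^{2[N]})$ is an isomorphism, and all cohomology is concentrated in even degree. In particular $\mathrm{rk}\,A(\mathbb{P}^{2[N]})$ equals the total Betti number $\sum_i b_i(\mathbb{P}^{2[N]})$, and the intersection pairing $A^k \times A^{2N-k} \to \mathbb{Z}$ is perfect. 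It therefore suffices to (a) count the classes $\sigma_\alpha$ and (b) show they are linearly independent.

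For step (a), the number of admissible triples $\alpha = (\mathbf{a},\mathbf{b},\mathbf{c})$ is $\sum_{A+B+C=N} p(A)p(B)p(C)$, where $p$ is the partition function; this is exactly the coefficient of $q^N$ in $\left(\sum_{n \geq 0} p(n) q^n\right)^3 = \prod_{m \geq 1}(1-q^m)^{-3}$. On the other hand, G\"ottsche's formula \cite{GOTTSCHE} specialized to $X = \mathbb{P}^2$ (with Betti numbers $b_0 = b_2 = b_4 = 1$ and $b_1 = b_3 = 0$) gives $\sum_N \left(\sum_i b_i(\mathbb{P}^{2[N]})\right) q^N = \prod_{m \geq 1}(1-q^m)^{-3}$. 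Comparing coefficients of $q^N$ shows that the number of classes $\sigma_\alpha$ equals $\mathrm{rk}\,A(\mathbb{P}^{2[N]})$.

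For step (b) I would produce a ``dual'' family of cycles and exploit the pairing. For each $\alpha$ I would impose a complementary system of general incidence conditions, cutting out a cycle $\tau_\alpha$ of codimension $N - r + t$ (complementary to $\mathrm{codim}\,\sigma_\alpha = N + r - t$), again supported generically on reduced schemes. Using the generality of all the fixed linear data together with Kleiman's transversality theorem, I would argue that the general representatives of $U_\alpha$ and of the dual conditions meet transversally in the locus of reduced schemes, so that $\deg(\sigma_\alpha \cdot \tau_\beta)$ can be read off combinatorially. Ordering triples by a suitable partial order (refining first by the integers $A,B,C$ and then by dominance of the partitions), I expect the matrix $\left(\deg(\sigma_\alpha \cdot \tau_\beta)\right)$ to be triangular with nonzero diagonal entries, hence invertible; pairing a hypothetical relation $\sum c_\alpha \sigma_\alpha = 0$ against each $\tau_\beta$ then forces all $c_\alpha = 0$, giving linear independence. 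Combined with the count from (a), this proves the $\sigma_\alpha$ form a basis. Alternatively, as in the original argument, one may realize the $\sigma_\alpha$ as flat limits of the Ellingsrud--Str\o mme classes under a degeneration of the linear data and show the transition matrix is triangular, deducing the result from the fact that the ES classes already form a basis.

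The main obstacle is the transversality and nondegeneracy analysis in step (b). Kleiman--Bertini controls the intersections on the open locus of reduced schemes, but the classes $\sigma_\alpha = \left[\overline{U_\alpha}\right]$ involve the closure, whose boundary consists of nonreduced schemes on which the incidence and collinearity conditions can degenerate and produce excess intersection. The crux is therefore to bound the contribution of this boundary, showing that it does not affect the diagonal terms and that the off-diagonal pairings vanish for the chosen order. Controlling these boundary contributions, rather than establishing the cell-decomposition count, is where the real work lies.
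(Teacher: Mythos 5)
The paper itself offers no proof of this statement: it is imported verbatim from Mallavibarrena--Sols \cite{MS}, so there is no in-paper argument to compare yours against. Judged on its own terms, your architecture is the standard (and essentially the historically correct) one: $\mathbb{P}^{2[N]}$ has an affine cell decomposition, so $A(\mathbb{P}^{2[N]})$ is free of rank equal to the total Betti number, your generating-function count $\sum_{A+B+C=N}p(A)p(B)p(C) = [q^N]\prod_{m\ge 1}(1-q^m)^{-3}$ correctly matches G\"ottsche's formula for $X=\mathbb{P}^2$, and it then suffices to prove linear independence. Up to that point everything you write is sound.

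The gap is that the linear independence step is announced rather than executed, and it is the entire content of the theorem. You never specify the dual cycles $\tau_\beta$, the partial order, or why the pairing matrix $\left(\deg(\sigma_\alpha\cdot\tau_\beta)\right)$ should be triangular with nonzero diagonal; the phrase ``I expect the matrix to be triangular'' is doing all the work. Moreover, the difficulty you correctly flag at the end --- excess and boundary contributions from the nonreduced locus in $\overline{U_\alpha}$ --- is exactly what makes a naive Kleiman--Bertini argument insufficient, since Kleiman's theorem requires a transitive group action and $\Aut(\mathbb{P}^2)$ does not act transitively on the configurations of flags $(P_e \in L_e)$ defining $U_\alpha$; one must argue generic transversality by hand and show the intersection is supported in the reduced locus. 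The route that actually closes this gap (and the one used in \cite{MS}) is the alternative you mention only in your last sentence: degenerate the Ellingsrud--Str\o mme basis elements into the $\sigma_\alpha$ and show the transition matrix is unitriangular, inheriting independence from the cell decomposition. As written, your proposal is a correct plan with its central step unproved, not a proof.
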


We will often refer to the lines spanned by the subschemes $Z_{3,g}$ as \emph{moving lines} and think of them as lines through the point $Q$ which vary.

\subsection{Some Examples of the Basis}

Consider the triple of partitions $\alpha = (0,(1),(1,\ldots,1))$. We fix a general line $M_1$ and a point $Q$, and consider the subset of schemes $Z$ such that $Z$ can be written as the disjoint union of two subschemes $Z_2$ and $Z_3$ where:
\begin{enumerate}
\item $Z_2$ meets the line $M_1$, and
\item $Z_3$ contains $N-1$ distinct subschemes each of which is collinear with $Q$. 
\end{enumerate}
The codimension of the associated class $\sigma_{\alpha}$ is $N + 0 - (N-1) = 1$. It follows that $\sigma_{\alpha}$ is the locus of schemes which meet a fixed general line in the plane. We will refer to this divisor by $H$ from here on out. 

Consider now the triple of partitions $\beta = (0,0,(2,1,\ldots,1))$. In this case, we fix only a general point $Q$. The locally open set $U_{\beta}$ is the collection of schemes $Z$ such that it contains a subscheme of length two collinear with $Q$. As with $\alpha$, the codimension of the associated class $\sigma_{\beta}$ has codimension $N + 0 - (N-1) = 1$ in the Hilbert scheme. This class, along with $H$, generates the Picard group of $\mathbb{P}^{2[N]}$. 

It is often useful to draw schematic pictures for these classes. Fixed lines and points will be solid, points which are allowed to vary will be hollow, and lines which are moving will be dashed. Points through which the moving lines vary will be marked as a thick ``X".

\section{An Illustrative Example}\label{sec:example}

Let $\alpha = (0,0,(3,2,1))$ be a triple of partitions for $N = 6$ and let $\sigma$ be the associated class in the MS basis for $A(\mathbb{P}^{2[6]})$. Fix a line $L$ and let $H$ be the locus of schemes whose support meets $L$, and fix a point $P$ not on $L$ and let $U$ be the locus of schemes which contain distinct subschemes of length one, two, and three collinear with $P$. See Figure \ref{fig:example0}. 
\begin{figure}[H]
    \centering
    \input{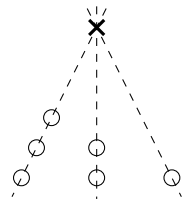}
    \caption{The schematic diagram for $\sigma$.}
    \label{fig:example0}
\end{figure}
The intersection $H \cap U$ consists of three irreducible components: the first corresponding to when the free point of $U$ lies on $L$, the second corresponding to when of the points in the subscheme of length two collinear with $P$ lies on $L$, and finally when one of the points of the subscheme of length three collinear with $P$ lies on $L$. See Figure \ref{fig:intcomps} for the corresponding pictures. We have labeled each component with names that we will define precisely later. 
\begin{figure}[H]
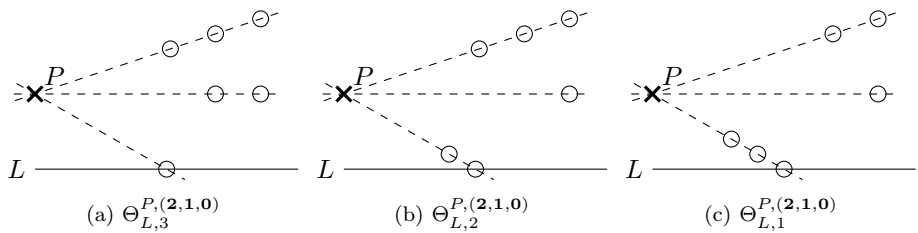
 
    \centering
    
    \subfloat[$\Theta^{P,\mb{(2,1,0)}}_{L,3}$\label{fig:intcomps1}]{\input{figures/pieri/examples/complicated/comp1-3}}
    \subfloat[$\Theta^{P,\mb{(2,1,0)}}_{L,2}$\label{fig:intcomps2}]{\input{figures/pieri/examples/complicated/comp1-2}}
    \subfloat[$\Theta^{P,\mb{(2,1,0)}}_{L,1}$]{\input{figures/pieri/examples/complicated/comp1-1}}

    \caption{The components of the intersection $H \cdot \sigma$.}
    \label{fig:intcomps}
\end{figure}
The first locus is already the class associated to the triple of partitions $(0,(1),(3,2))$ in the MS basis. We will resolve the class of the second and third locus via a series of degenerations. We will start with the third component.

To do this, we degenerate the point $P$ onto the line $L$. There are three components in the limit each consisting of at most one of the points on each moving line colliding with the point $P$. More precisely, they are the locus of schemes with a subscheme of length three varying on $L$ and a subscheme of length two collinear with $P$, the locus of subschemes containing $P$ and two disjoint subschemes of length two collinear with $P$, and the final component consisting of subschemes containing a subscheme of length two supported at $P$ and a subscheme of length two collinear with $P$. See \ref{fig:deg1comps} for the corresponding pictures. The first component is numerically equivalent to the element of the MS basis associated to $(0,(3),(2,1))$.
\begin{figure}[H]
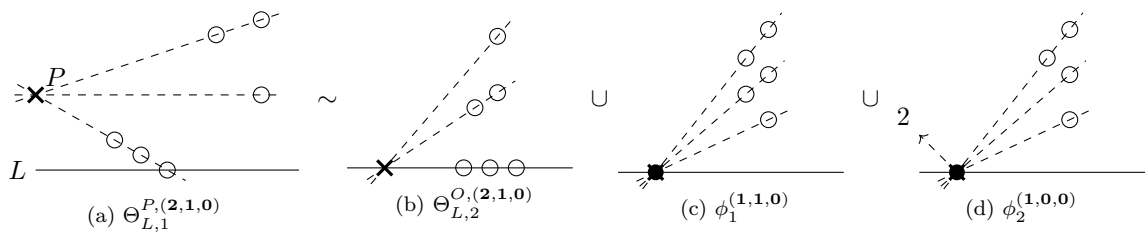
 
    \centering
    
    \raisebox{-.4\height}{\subfloat[$\Theta^{P,\mb{(2,1,0)}}_{L,1}$]{\input{figures/pieri/examples/complicated/comp1-1}}}\ $\sim$
    \raisebox{-.4\height}{\subfloat[$\Theta^{O,\mb{(2,1,0)}}_{L,2}$\label{fig:deg1comps1}]{\input{figures/pieri/examples/complicated/comp2-1}}}\ $\cup$
    \raisebox{-.4\height}{\subfloat[$\phi^{\mb{(1,1,0)}}_{1}$\label{fig:deg1comps2}]{\input{figures/pieri/examples/complicated/comp2-2}}}\ $\cup$
    \raisebox{-.4\height}{\subfloat[$\phi^{\mb{(1,0,0)}}_{2}$\label{fig:deg1comps3}]{\input{figures/pieri/examples/complicated/comp2-3}}}

    \caption{The three loci of the first degeneration.}
    \label{fig:deg1comps}
\end{figure}

To determine the multiplicities of each component, we pair both sides of the degeneration with explicit classes. These classes isolate the multiplicity of each component as the intersection of this class with the general fiber of the degeneration. See Section \ref{sec:coefs} for a more explicit description of the classes, but the idea is that we fix a general point for each moving line in the component and then consider the locus of schemes which contain nonreduced subschemes of the correct length supported at each of these points. The first component always appears with multiplicity equal to the number of points on the fixed line, in this case three. The resulting formula to determine the multiplicities of the other components is combinatorial based on the partition indexing the components, and in this case it is the number of ways to obtain the sequence $\mb{m}$ from the sequence $\lambda$ by adding the correct number of boxes, at most one to each row in $\lambda$, and then reordering. See Figure \ref{fig:counting}. 
\begin{figure}[H]
\centering
\input{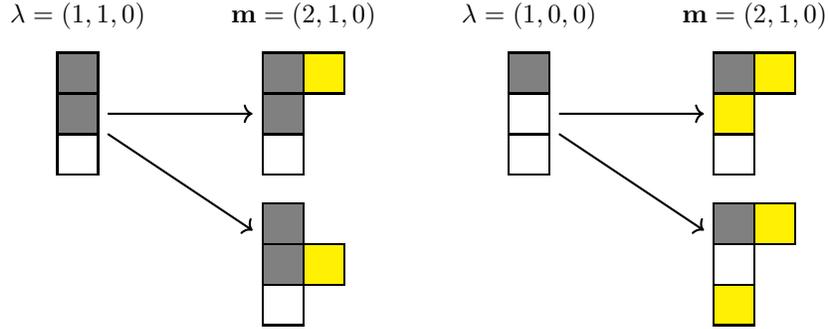}
\caption{The different ways of assembling $(2,1,0)$ by adding a box to $(1,1,0)$ [left] and $(2,1,0)$ from $(1,0,0)$ [right]. The young diagrams in the bottom row require reordering. Gray boxes are entries of $\lambda$, yellow boxes are added, and white boxes indicate zeros in the partitions (which we allow here, see Section \ref{sec:degenerations}).}
\label{fig:counting}
\end{figure}
We record the multiplicities for all three components in Table \ref{tab:deg1ints} below. 
\begin{table}[H]
\centering
\begin{tabular}{ c c c c }
 & $\Theta^{O,\mb{(2,1,0)}}_{L,2}$ & $\phi^{\mb{(1,1,0)}}_{1}$ & $\phi^{\mb{(1,0,0)}}_{2}$ \\ 
 \hline
 Mult. of Comp.   & 3 & 2 & 2
\end{tabular}
\caption{The multiplicities of the components of the degeneration in Figure \ref{fig:deg1comps}.}
\label{tab:deg1ints}
\end{table}
To write the remaining classes in the MS basis, we degenerate again. We start with a locus which is identical except the subscheme of positive length is supported at a fixed point $Q$ distinct from $P$. We degenerate $Q$ onto $P$, and obtain the locus we want as one of the irreducible components. We always start with the loci with the lowest length at $P$ (when multiple loci appear with the same length at $P$, we do them all simultaneously). In our case, we degenerate the second locus (Figure \ref{fig:deg1comps2}) first. Starting with the class of the locus of subschemes containing a fixed general point $Q$ and two subschemes of length two collinear with the fixed general point $P$, we degenerate $Q$ onto $P$. The special fiber of this degeneration is supported along three irreducible components consisting of at most one of the points in each moving subscheme of length two colliding with $P$. Precisely, the first is the locus of schemes containing $P$ and two subschemes of length two collinear with $P$, the second is the locus of schemes containing a length two subscheme supported at $P$ and a subscheme of length two collinear with $Q$, and the third is the locus of schemes containing a length three subscheme supported at $P$. See Figure \ref{fig:deg2comps}. 
\begin{figure}[H]
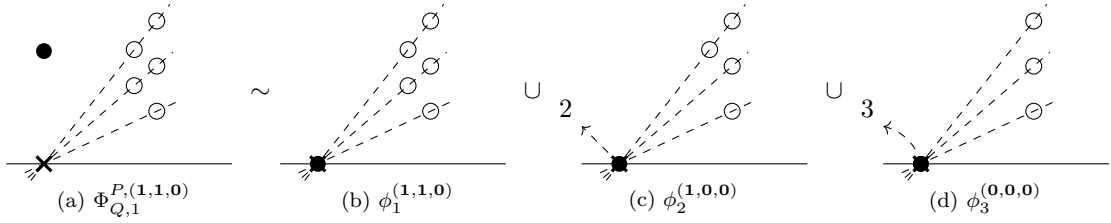

    \centering
    
    \raisebox{-.4\height}{\subfloat[$\Phi^{P,\mb{(1,1,0)}}_{Q,1}$]{\input{figures/pieri/examples/complicated/comp3-0}}}\ $\sim$
    \raisebox{-.4\height}{\subfloat[$\phi^{\mb{(1,1,0)}}_{1}$\label{fig:deg2comps1}]{\input{figures/pieri/examples/complicated/comp2-2}}}\ $\cup$
    \raisebox{-.4\height}{\subfloat[$\phi^{\mb{(1,0,0)}}_{2}$\label{fig:deg2comps2}]{\input{figures/pieri/examples/complicated/comp2-3}}}\ $\cup$
    \raisebox{-.4\height}{\subfloat[$\phi^{\mb{(0,0,0)}}_{3}$\label{fig:deg2comps3}]{\input{figures/pieri/examples/complicated/comp3-3}}}
    
    \caption{The three loci of the degeneration.}
    \label{fig:deg2comps}
\end{figure}
We again determine the multiplicities as the number of ways to add the correct number of 1's to the entries of $\lambda$ to obtain $\mb{m}$ after possibly reordering. For instance, to get $(1,1,0)$ from $(0,0,0)$, we must add two 1's to any of the three 0's in $\lambda$ for a total of three choices. See Figure \ref{fig:counting2}.
\begin{figure}[H]
\centering
\input{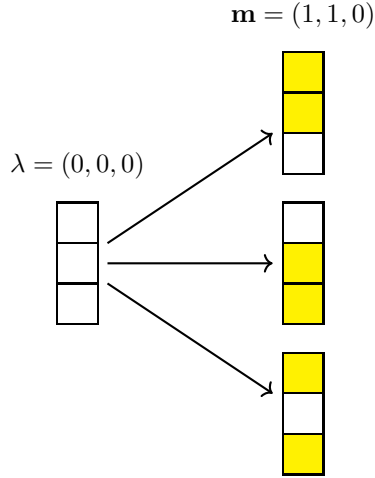}
\caption{The different ways of assembling $(1,1,0)$ by adding a box to $(0,0,0)$. The young diagrams in the lower two rows require reordering. Gray boxes are entries of $\lambda$, yellow boxes are added, and white boxes indicate zeros in the partitions (which we allow here, see Section \ref{sec:degenerations}).}
\label{fig:counting2}
\end{figure}
We record the resulting multiplicities in Table \ref{tab:deg2ints}.
\begin{table}[H]
\centering
\begin{tabular}{  c c c c }
 & $\phi^{\mb{(1,1,0)}}_{1}$ & $\phi^{\mb{(1,0,0)}}_{2}$ & $\phi^{\mb{(0,0,0)}}_{3}$ \\ 
 \hline 
 Mult. of Comp.  & 1 & 2 & 3
\end{tabular}
\caption{The multiplicities of the components of the degeneration in \ref{fig:deg1comps}.}
\label{tab:deg2ints}
\end{table}
Combining the result of these computations gives the equivalence of classes in \ref{fig:deg2final}. 
\begin{figure}[H]
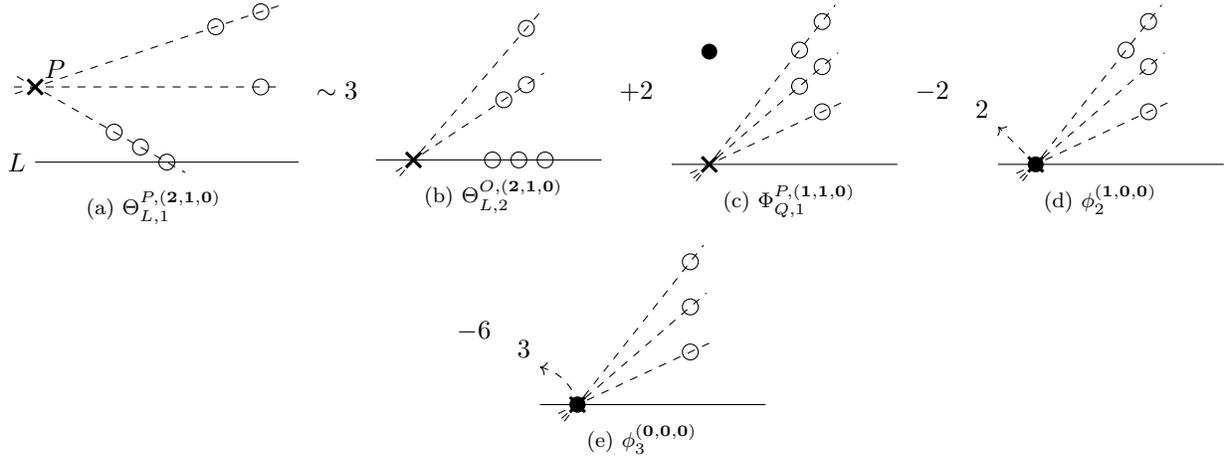

    \centering
    \raisebox{-.4\height}{\subfloat[$\Theta^{P,\mb{(2,1,0)}}_{L,1}$]{\input{figures/pieri/examples/complicated/comp1-1}}}\ $\sim3$~
    \raisebox{-.4\height}{\subfloat[$\Theta^{O,\mb{(2,1,0)}}_{L,2}$]{\input{figures/pieri/examples/complicated/comp2-1}}}\ $+2$~
    \raisebox{-.4\height}{\subfloat[$\Phi^{P,\mb{(1,1,0)}}_{Q,1}$]{\input{figures/pieri/examples/complicated/comp3-0}}}\ $-2$~
    \raisebox{-.4\height}{\subfloat[$\phi^{\mb{(1,0,0)}}_{2}$\label{fig:deg2final2}]{\input{figures/pieri/examples/complicated/comp2-3}}}\ $-6$~
    \raisebox{-.4\height}{\subfloat[$\phi^{\mb{(0,0,0)}}_{3}$]{\input{figures/pieri/examples/complicated/comp3-3}}}
    
    \caption{The equivalence of classes roughly halfway through determining the class in the MS basis of the component in Figure \ref{fig:intcomps1}. }
    \label{fig:deg2final}
\end{figure}
The point here is that we have replaced a class in the sum with a class in the MS basis at the expense of adding classes comprised of schemes which contain nonreduced subschemes of higher length at the point $P$. Unfortunately, we must also accept the possibility of negative signs, and as a result our process is not positive. Since this length is bounded, the process eventually terminates. We continue the process with the locus whose subscheme at $P$ has length two (Figure \ref{fig:deg2final2}).

We start with a locus which is identical except the nonreduced subscheme of length two is supported a fixed point $Q$ distinct from $P$. We degenerate the point $Q$ onto $P$ and obtain two components of the special fiber of the degeneration. The first is the locus of schemes containing a nonreduced subscheme of length two at $P$ and a subscheme of length two collinear with $P$, and the second is the locus of schemes containing a nonreduced subscheme of length three at $P$. See Figure \ref{fig:deg2acomps}. We determine the multiplicities as before to get one and three for the first and second components, respectively. Combining this with the equality in Figure \ref{fig:deg2final}, we arrive at the equality in Figure \ref{fig:deg3final}. Notice that the contribution of the class whose schemes contain subschemes of length three from this last degeneration cancel with those we already have.
\begin{figure}[H]
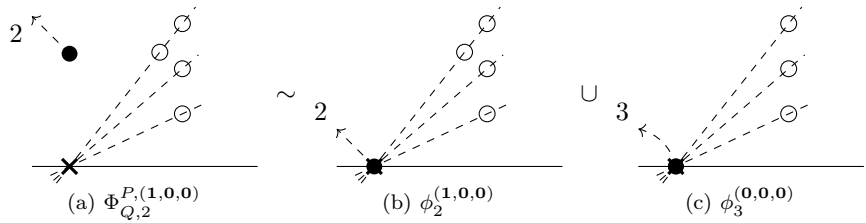

    \centering

    \raisebox{-.4\height}{\subfloat[$\Phi^{P,\mb{(1,0,0)}}_{Q,2}$]{\input{figures/pieri/examples/complicated/comp4-3}}}\ $\sim$
    \raisebox{-.4\height}{\subfloat[$\phi^{\mb{(1,0,0)}}_{2}$]{\input{figures/pieri/examples/complicated/comp2-3}}}\ $\cup$
    \raisebox{-.4\height}{\subfloat[$\phi^{\mb{(0,0,0)}}_{3}$]{\input{figures/pieri/examples/complicated/comp3-3}}}
    
    \caption{The loci of the second degeneration.}
    \label{fig:deg2acomps}
\end{figure}
\begin{figure}[H]
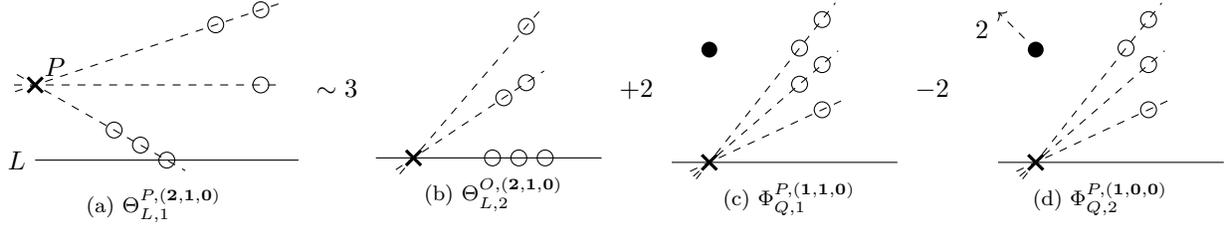

    \centering
    
    \raisebox{-.4\height}{\subfloat[$\Theta^{P,\mb{(2,1,0)}}_{L,1}$]{\input{figures/pieri/examples/complicated/comp1-1}}}\ $\sim3$~
    \raisebox{-.4\height}{\subfloat[$\Theta^{O,\mb{(2,1,0)}}_{L,2}$]{\input{figures/pieri/examples/complicated/comp2-1}}}\ $+2$~
    \raisebox{-.4\height}{\subfloat[$\Phi^{P,\mb{(1,1,0)}}_{Q,1}$]{\input{figures/pieri/examples/complicated/comp3-0}}}\ $-2$~
    \raisebox{-.4\height}{\subfloat[$\Phi^{P,\mb{(1,0,0)}}_{Q,2}$]{\input{figures/pieri/examples/complicated/comp4-3}}}

    \caption{The equivalence in Figure \ref{fig:deg2final} after substituting in the results of the second degeneration. }
    \label{fig:deg3final}
\end{figure}

At this point, we rely on the degeneration of Mallavibarrena and Sols to break up classes of nonreduced subschemes supported at a fixed point into classes whose general member is comprised of distinct points contained in lines. Figure \ref{fig:compfinal} shows the final result of our computation after applying this degeneration.
\begin{figure}[H]
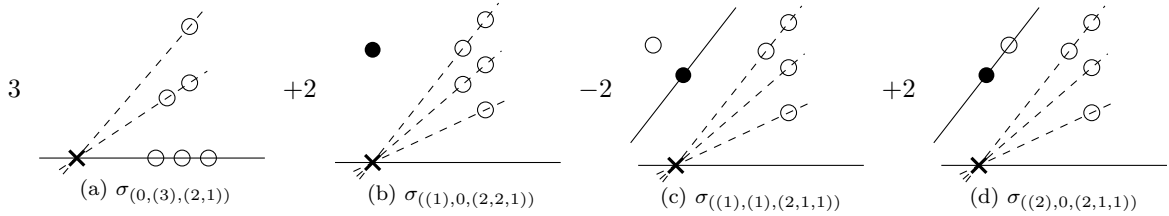

    \centering

    $3$~
    \raisebox{-.4\height}{\subfloat[$\sigma_{(0,(3),(2,1))}$]{\input{figures/pieri/examples/complicated/comp2-1}}}\ $+2$~
    \raisebox{-.4\height}{\subfloat[$\sigma_{((1),0,(2,2,1))}$]{\input{figures/pieri/examples/complicated/comp3-0}}}\ $-2$~
    \raisebox{-.4\height}{\subfloat[$\sigma_{((1),(1),(2,1,1))}$]{\input{figures/pieri/examples/complicated/comp5-1}}}\ $+2$~
    \raisebox{-.4\height}{\subfloat[$\sigma_{((2),0,(2,1,1))}$]{\input{figures/pieri/examples/complicated/comp5-2}}}

    \caption{The results of our hard work: the class of the component in Figure \ref{fig:intcomps1} in the MS basis.}
    \label{fig:compfinal}
\end{figure}

The computation of the class of the second component (Figure \ref{fig:intcomps2}) of the intersection is completely analogous. The result of that computation is contained in Figure \ref{fig:secondcomp}. 

\begin{figure}[H]
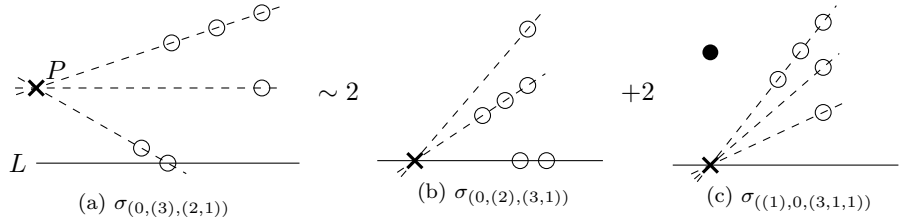

    \centering

    \raisebox{-.4\height}{\subfloat[$\sigma_{(0,(3),(2,1))}$]{\input{figures/pieri/examples/complicated/comp1-2}}}\ $\sim2$~
    \raisebox{-.4\height}{\subfloat[$\sigma_{(0,(2),(3,1))}$]{\input{figures/pieri/examples/complicated/comp6-0}}}\ $+2$~
    \raisebox{-.4\height}{\subfloat[$\sigma_{((1),0,(3,1,1))}$]{\input{figures/pieri/examples/complicated/comp6-1}}}

    \caption{The class of the component in Figure \ref{fig:intcomps2} in the MS basis.}
    \label{fig:secondcomp}
\end{figure}

\section{Degenerations} \label{sec:degenerations}

We need notation for a few loci in the Hilbert scheme before we can describe the degenerations. For that purpose, fix a partition $\mb{m}$ of $N$ of length $r$. For the purpose of this section, we allow partitions to have finitely many zeros and consider them when counting the length. Fix $i$ such that $1 \leq i \leq r$ and such that $\mb{m}_i > \mb{m}_{i+1}$. Let $P$ and $Q$ be distinct points in $\mathbb{P}^2$, let $L$ be a line in $\mathbb{P}^2$, and let $q$ be a positive integer. 

Let $\Theta^{P,\mb{m}}_{L,i}$ be the closure of the locus in $\mathbb{P}^{2[N + r]}$ of schemes $Z$ such that:
\begin{itemize}
\item for each $1 \leq j \leq r$, $Z$ contains a subscheme $Z_j$ of length $\mb{m}_j+1$ collinear with $P$ and spanning a line $L_j$ such that the support of $Z_j$ does not meet $L_k$ for $k \neq j$, and
\item the support of $Z_i$ meets $L$.
\end{itemize}
See Figure \ref{fig:HMLexample} for an example of this locus when $\mb{m} = (3,2,1)$ and $i = 2$. 
\begin{figure}[H]
    \centering
    \input{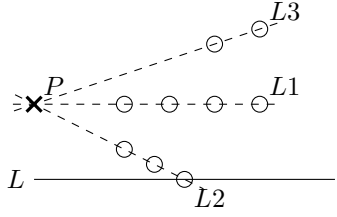}
    \caption{A schematic picture for the scheme $\Theta^{P,(3,2,1)}_{L,2}$.}
    \label{fig:HMLexample}
\end{figure}

Notice that the general scheme in $\Theta_{L,i}^{P,\mb{m}}$ is supported at $N + r$ distinct points. There are $\mb{m}_j + 2$ degrees of freedom for each collinear subscheme except when $j = i$. In this case there are $\mb{m}_i + 1$ degrees of freedom, so  the dimension of $\Theta_{L,i}^{P,\mb{m}}$ is 
$$\mb{m}_i + 1 + \sum_{j \neq i} (\mb{m}_j + 2) = 2r + N - 1.$$

\begin{figure}[H]
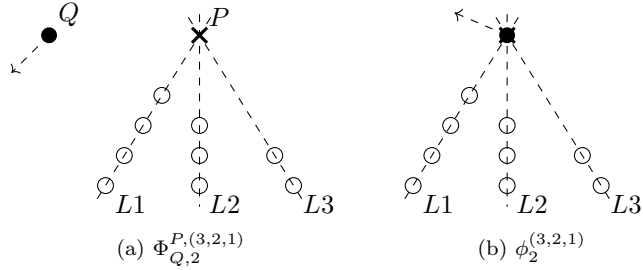

    \centering
    
    \subfloat[$\Phi^{P,(3,2,1)}_{Q,2}$\label{fig:HMQexample1}]{\input{figures/pieri/degenerations/HMQexample}}\qquad
    \subfloat[$\phi^{(3,2,1)}_{2}$\label{fig:HMQexample2}]{\input{figures/pieri/degenerations/HMQexample2}}
    
    \caption{Pictures for the scheme $\Phi^{P,(3,2,1)}_{Q,2}$ and $\phi^{(3,2,1)}_{2}$.}
    \label{fig:HMQexample}
\end{figure}
Let $\Phi_{Q,q}^{P,\mb{m}}$ be the closure of the locus in $\mathbb{P}^{2[N+r+q]}$ of schemes $Z$ such that:
\begin{itemize}
    \item $Z$ contains a nonreduced subscheme $Z_Q$ of length $q$ at $Q$;
    \item for each $1 \leq j \leq r$, $Z$ contains a subscheme $Z_j$ of length $\mb{m}_j + 1$ collinear with $P$ and spanning a line $L_j$ such that the support of any $Z_j$ does not meet $L_j$ for $k \neq j$;
    \item and $Q$ does not lie on $L_j$ for any $j$. 
\end{itemize}
Finally, let $\phi_q^\mb{m}$ be the closure of the locus in $\mathbb{P}^{2[N+q+r]}$ of schemes $Z$ such that:
\begin{itemize}
    \item $Z$ contains a nonreduced subscheme $Z_O$ of length $q$ at the origin;
    \item and for each $1 \leq j \leq r$, $Z$ contains a subscheme $Z_j$ of length $\mb{m}_j + 2$ collinear with the origin and spanning a line $L_j$ such that the support of any $Z_j$ does not meet $L_k$ away from the origin for $k \neq j$.
\end{itemize}
Note that $\phi_q^\mb{m}$ differs from $\Phi_{Q,q}^{P,\mb{m}}$ since $P$ and $Q$ are distinct. See Figure \ref{fig:HMQexample} for an example of each locus when $\mb{m} = (3,2,1)$ and $q = 2$. In either case, there are $q-1$ degrees of freedom defining the length $q$ subscheme at $Q$ and $\mb{m}_i + 2$ degrees of freedom for each collinear subscheme, so  the dimension of $\Phi_{Q,q}^{P,\mb{m}}$ is 
$$(q-1) + \sum_{i=1}^r (\mb{m}_i + 2) = q + 2r + N - 1.$$
Notice that the general scheme in either locus is supported at $N + r + 1$ distinct points. It is important to observe the following fact. 
\begin{lem}
The loci $\phi_q^\mb{m}$ and $\Phi_{Q,q}^{P,\mb{m}}$ are irreducible. 
\end{lem}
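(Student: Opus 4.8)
The plan is to realize each of the two loci as the closure of the image of an irreducible variety under a morphism to the appropriate Hilbert scheme of points; since the image of an irreducible variety is irreducible and irreducibility is inherited by closures, this will establish the lemma. The parameter variety will be built as a product of a \emph{punctual} Hilbert scheme, which accounts for the length-$q$ fat point at $Q$ (respectively at the origin $O$), with one projective bundle over $\mathbb{P}^1$ for each collinear subscheme $Z_j$.

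First I would assemble the parameter space for $\Phi_{Q,q}^{P,\mb{m}}$. Let $\mathrm{Hilb}^q_Q$ denote the punctual Hilbert scheme of length-$q$ subschemes supported at $Q$; by Brian\c{c}on's theorem on the punctual Hilbert scheme of a smooth surface it is irreducible of dimension $q-1$. For each $j$ let $B_j$ parametrize pairs $(L_j,Z_j')$ with $L_j$ a line through $P$ and $Z_j'$ a length-$(\mb{m}_j+1)$ subscheme of $L_j$ supported away from $P$. The lines through $P$ form a $\mathbb{P}^1$ and the length-$\ell$ subschemes of a fixed $\mathbb{P}^1$ form $\mathbb{P}^{\ell}$, so $B_j$ is a dense open subset of a $\mathbb{P}^{\mb{m}_j+1}$-bundle over $\mathbb{P}^1$ and is therefore irreducible of dimension $\mb{m}_j+2$. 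Hence $W=\mathrm{Hilb}^q_Q\times\prod_{j=1}^r B_j$ is irreducible of dimension $(q-1)+\sum_j(\mb{m}_j+2)=q+2r+N-1$, matching the dimension recorded above for $\Phi_{Q,q}^{P,\mb{m}}$. For $\phi_q^\mb{m}$ I would run the identical construction with $Q$ replaced by the origin $O$; the only change is that each $B_j$ now records a line through $O$ together with the $\mb{m}_j+1$ points of $Z_j$ lying away from $O$, the remaining origin point of the length-$(\mb{m}_j+2)$ subscheme $Z_j$ being supplied by $Z_O$. Again $W$ is a product of irreducibles, hence irreducible.

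Next I would construct the morphism. In either case, on the dense open $W^\circ\subseteq W$ cut out by the general-position conditions---that the supports of the $Z_j'$ be pairwise disjoint and, in the $\Phi$ case, avoid $Q$---the fat point and the $Z_j'$ are pairwise disjoint, so their union is a flat family of length-$(N+r+q)$ subschemes and defines a morphism $W^\circ\to\mathbb{P}^{2[N+r+q]}$; the length is $q+\sum_j(\mb{m}_j+1)=N+r+q$ as required. For $\phi_q^\mb{m}$ the resulting subscheme $Z$ contains, for each $j$, the length-$(\mb{m}_j+2)$ subscheme $Z_j'\cup\{O\}$ collinear with $O$, so $Z$ indeed lies in the defining locus. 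By construction the image of $W^\circ$ is exactly the locally closed locus whose closure defines $\Phi_{Q,q}^{P,\mb{m}}$ (respectively $\phi_q^\mb{m}$), so that locus is the closure of the image of the irreducible variety $W^\circ$ and is irreducible. The one substantive ingredient is Brian\c{c}on's irreducibility theorem, which is what forces the fat-point factor to be irreducible and is special to surfaces; the remaining points---recognizing $B_j$ as a projective bundle over $\mathbb{P}^1$ and checking that the disjoint union is an honest flat family on $W^\circ$---are routine, the only bookkeeping worth care being the length count for $\phi_q^\mb{m}$, where all the collinear subschemes pass through the single point $O$.
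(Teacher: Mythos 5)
Your argument is correct and is essentially the paper's own proof, which simply observes that each locus is the closure of a set isomorphic to the product of a Brian\c{c}on (punctual Hilbert) scheme with the irreducible parameter spaces for the collinear subschemes; you have just made the parameter space, the morphism to $\mathbb{P}^{2[N+r+q]}$, and the dimension and length bookkeeping explicit. The key ingredient in both is Brian\c{c}on's irreducibility theorem for the fat-point factor.
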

\begin{proof}
The loci $\phi_q^\mb{m}$ and $\Phi_{Q,q}^{P,\mb{m}}$ are defined as closures of loci which are isomorphic to the product of a Brian\c{c}on scheme and open sets defining elements of the MS basis, so they are irreducible of the given dimensions.  
\end{proof}

Let $\Sigma_j$ be the set of partitions obtained from some partition $\mb{m}$ by subtracting one $j$ times, at most once from each entry. Expressed as Young diagrams, the partitions obtained from $\mb{m} = (2,1,1)$ are shown in Figure \ref{fig:youngexample}.
\begin{figure}[H]
    \centering
    \input{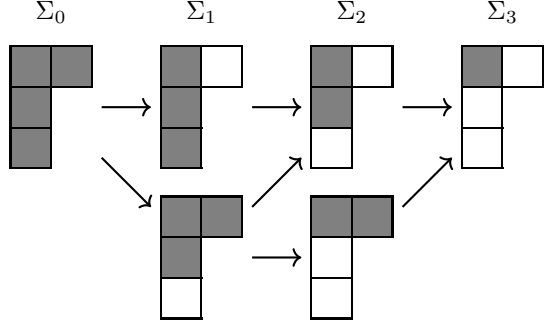}
    \caption{The sets $\Sigma_j$ for $\mb{m} = (2,1,1)$. Gray boxes are present, while white boxes are those which have been removed. }
    \label{fig:youngexample}
\end{figure}
Recall that for the purpose of this section our partitions may have zeros, so that, for instance, the unique partition of $\Sigma_3$ obtained from the partition $\mb{m} = (2,1,1)$ is $(1,0,0)$ and we consider it to be of length three.

Additionally, let $\Sigma_j^i$ be the set of partitions obtained from some partition $\mb{m}$ by subtracting one $j$ times, at most once from each entry, and always beginning with $\mb{m}_i$. This causes no issues because our choice of $i$ was such that $\mb{m}_i > \mb{m}_{i+1}$. For instance, if $\mb{m} = (2,1,1)$ as above, then $\Sigma^2_j = \Sigma^3_j = \Sigma_j$ for $j = 0$ and $j > 1$, but $\Sigma^2_1 = \Sigma^3_1$ is a singleton consisting of only the lower partition shown under $\Sigma_1$ in Figure \ref{fig:youngexample}.

\subsection{The First Degeneration}

Let $O$ be the origin, let $P_t$ be the point $(0,t)$ in the plane with $P = P_1 = (0,1)$, and let $L$ be the line $\set{y = 0}$. Let $F$ be the family in $\mathbb{P}^{2[N+r]} \times \mathbb{C}^*$ such that the fiber $F_t$ over $t \in \mathbb{C}^*$ is $\Theta^{P_t,\mb{m}}_{L,i}$. The closure $\hat{F}$ of $F$ in $\mathbb{P}^{2[N+r]} \times \mathbb{C}$ is a flat family with special fiber $\hat{F}_0$. 

\begin{prop} \label{prop:firstdeg}
The support of the special fiber $\hat{F}_0$ is contained in the union
$$\Theta^{O,\mb{m}}_{L,i} \cup \bigcup_{j = 1}^r \bigcup_{\lambda \in \Sigma^i_j} \phi^\lambda_{j}.$$
\end{prop}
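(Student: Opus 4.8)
The plan is to establish the containment set-theoretically by locating every irreducible component of $\hat F_0$. Since $\hat F$ is flat over the smooth curve $\mathbb C$ with general fiber $\Theta^{P_t,\mb m}_{L,i}$, the special fiber $\hat F_0$ is equidimensional of dimension $2r+N-1$, and its support is the union of the closures of its finitely many components, each of that dimension. It therefore suffices to show that a general point of each component lies in the closed set $\Theta^{O,\mb m}_{L,i}\cup\bigcup_{j}\bigcup_{\lambda\in\Sigma^i_j}\phi^\lambda_j$. By properness of $\mathbb P^{2[N+r]}$, such a point is, after a finite base change, a flat limit $Z_0=\lim_{t\to0}Z_t$ with $Z_t\in\Theta^{P_t,\mb m}_{L,i}$, so the problem becomes a direct analysis of these limits.

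First I would dissect the limit. For $t\neq0$ write $Z_t=\bigsqcup_{k=1}^r Z_{t,k}$, with $Z_{t,k}$ the length-$(\mb m_k+1)$ collinear piece on the line $L_{t,k}\ni P_t$, and let $W_k$ be the flat limit of $Z_{t,k}$; as $P_t\to O$, its supporting line $L_{0,k}$ passes through the origin. The away-from-origin part of $Z_0$ is the union of the reduced points of the $W_k$, and its origin part is a fat point built from the points that run into $O$. If $a_k\ge0$ of the points of $Z_{t,k}$ collide with $O$, then line $k$ leaves $\mb m_k+1-a_k$ reduced points off the origin and the origin carries total length $j=\sum_k a_k$; setting $\lambda_k=\mb m_k-a_k$ exhibits $Z_0$ as a point of $\phi^\lambda_j$. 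The incident line is governed by $L$: writing $R_t$ for the point of $Z_{t,i}$ on $L$, either $L_{0,i}\neq L$, in which case $L_{0,i}\cap L=\{O\}$ forces $R_t\to O$ and hence $a_i\ge1$, or else $L_{0,i}=L$, in which case $W_i$ lies on $L$ and $Z_0$ lands in $\Theta^{O,\mb m}_{L,i}$.

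The heart of the argument is to bound the collision numbers at a general point of a component. Let $s=\#\{k:a_k\ge1\}$ be the number of colliding lines. The $a_k$ points approaching $O$ along the fixed line $L_{0,k}$ collapse to the unique length-$a_k$ subscheme of that line, so the resulting fat point is glued from $s$ pieces and varies in only an $(s-1)$-parameter family of relative scales; counting the $r$ supporting lines and the $N+r-j$ off-origin points, the stratum of limits with collision pattern $a$ has dimension $r+(N+r-j)+(s-1)=2r+N-1-(j-s)$. This meets the component dimension $2r+N-1$ exactly when $j=s$, i.e. when $a_k\le1$ for every $k$; any pattern with some $a_k\ge2$ lies in a strictly smaller stratum and cannot be a component. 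Together with $a_i\ge1$ from the case $L_{0,i}\neq L$, the patterns surviving at general points of components subtract one from exactly $j$ distinct entries of $\mb m$, always including the $i$th, so $\lambda\in\Sigma^i_j$. Each component of $\hat F_0$ thus lies in $\Theta^{O,\mb m}_{L,i}$ or in some $\phi^\lambda_j$ with $\lambda\in\Sigma^i_j$, which is the asserted containment.

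The step I expect to be the real obstacle is the local flat-limit analysis at $O$ underlying the two moduli claims: one must parametrize the approaching points as $P_t$ is translated along $L_{t,k}$, compute the limiting ideal, and verify both that $s$ collisions from distinct directions really sweep out an $(s-1)$-dimensional family of length-$s$ fat points and that merging two approaches onto a single line genuinely costs a parameter. A secondary subtlety to dispatch is that the exceptional locus $L_{0,i}=L$, and the mixed limits where $W_i$ degenerates onto $L$ while other lines collide, are all absorbed into $\Theta^{O,\mb m}_{L,i}$ and do not produce components outside the stated list.
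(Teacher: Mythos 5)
Your reduction to flat limits, the observation that each limiting line $L_{0,k}$ passes through the origin, the dichotomy on whether $L_{0,i}=L$ (which otherwise forces $a_i\geq 1$ and is exactly why only $\Sigma^i_j$ appears), and the bookkeeping $\lambda_k=\mb{m}_k-a_k$ all run parallel to the paper's proof. The divergence --- and the gap --- is in how you rule out collision patterns with some $a_k\geq 2$. You do this by a direct dimension count of the stratum with collision pattern $a$, and that count hinges on the assertion that the punctual part at $O$ sweeps out only an $(s-1)$-parameter family of ``relative scales.'' You correctly flag this as the obstacle, and as written it is genuinely unproven: the natural ambient family --- length-$j$ schemes at $O$ containing the curvilinear length-$a_k$ subschemes of the lines $L_{0,k}$ --- can have dimension strictly larger than $s-1$. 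For two lines with $a_1=a_2=2$ it is the $\mathbb{P}^2$ of colength-four ideals contained in $\mathfrak{m}^2$, not a $\mathbb{P}^1$, so the bound really does require extracting more from the limit than the curvilinear pieces (for instance, that the limiting ideal contains the product $\prod_k\ell_{0,k}$ of the limiting linear forms). Without that local computation your inequality $\dim(\text{stratum})\leq 2r+N-1-(j-s)$ is not established, and note that a dimension count on the ambient loci cannot rescue it: $\dim\phi^{\mb{m}'}_{l}=N+2r-1$ for \emph{every} pattern, regardless of the $l_k$.

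The paper closes exactly this step differently and more cheaply. Since every component $C$ of $\hat{F}_0$ has dimension $N+2r-1$ and is contained in some $\phi^{\mb{m}'}_{l}$ of that same dimension, $C$ must equal $\phi^{\mb{m}'}_{l}$; hence the general point of $C$ carries a \emph{general} length-$l$ punctual scheme at $O$, which by Lemma \ref{lem:multone} (via Brian\c{c}on: a general punctual scheme lies on a smooth curve) has length one along each $L_{0,k}$. Semicontinuity gives $\ell(Z_0\cap L_{0,k})\geq\mb{m}_k+1$ for any limit, while the general point of $\phi^{\mb{m}'}_{l}$ contributes only $(\mb{m}_k-l_k+1)+1$, forcing $l_k\leq 1$. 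If you want to keep your stratification argument you must supply the deferred analysis of the punctual limits; otherwise, substituting the paper's ``the component is all of $\phi^{\mb{m}'}_{l}$, whose general member is too generic to satisfy the limit constraints unless $l_k\leq 1$'' argument closes the gap with no punctual Hilbert scheme computation beyond Lemma \ref{lem:multone}.
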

\begin{cor}\label{cor:equiv1}
There is an equivalence of cycles 
$$\left[\Theta^{P,\mb{m}}_{L,i}\right] \sim a \left[\Theta^{O,\mb{m}}_{L,i}\right] + \sum_{j=1}^r \sum_{\lambda \in \Sigma^i_j} c_j^\lambda \left[\phi^\lambda_{j}\right]$$
for nonnegative integers $a$ and $c_j^\lambda$. 
\end{cor}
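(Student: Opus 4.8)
The plan is to deduce the corollary from Proposition~\ref{prop:firstdeg} by the standard principle that the fibers of a flat family are rationally equivalent, with nonnegativity of the coefficients coming for free from the effectivity of the special fiber. The substantive geometry (identifying the support of $\hat{F}_0$) is exactly the content of the proposition, which I take as given; what remains is a formal bookkeeping argument together with a dimension count guaranteeing that each listed locus appears as an honest top-dimensional component.

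First I would record the rational equivalence $[\Theta^{P,\mb{m}}_{L,i}] \sim [\hat{F}_0]$. Write $\pi\colon \hat{F} \to \mathbb{C}$ and $p\colon \hat{F} \to \mathbb{P}^{2[N+r]}$ for the two projections. By hypothesis $\hat{F}$ is flat over $\mathbb{C}$, so for each $t$ the fiber $\pi^{-1}(t)$ is the Cartier divisor cut out by the pullback of the coordinate $t$, and $[\hat{F}_t] = p_*([\hat{F}] \cdot [\pi^{-1}(t)])$. Since any two points of $\mathbb{A}^1$ are rationally equivalent, the divisors $\pi^{-1}(t)$ are rationally equivalent on $\hat{F}$, and proper pushforward by $p$ preserves rational equivalence; hence $[\hat{F}_0] \sim [\hat{F}_t]$ in $A_*(\mathbb{P}^{2[N+r]})$ for general $t \in \mathbb{C}^*$. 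Because $P_t = (0,t) \notin L$ for $t \neq 0$, the pair $(P_t, L)$ is a general configuration, so $[\hat{F}_t] = [\Theta^{P_t,\mb{m}}_{L,i}] = [\Theta^{P,\mb{m}}_{L,i}]$, this class being independent of the general choice of center and line. This gives $[\Theta^{P,\mb{m}}_{L,i}] \sim [\hat{F}_0]$.

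Next I would decompose $[\hat{F}_0]$. As the zero scheme of $t$ on the irreducible $(2r+N)$-dimensional variety $\hat{F}$, the special fiber $\hat{F}_0$ is a Cartier divisor, hence pure of dimension $2r+N-1$, which is precisely $\dim \Theta^{P,\mb{m}}_{L,i}$. By Proposition~\ref{prop:firstdeg} its support lies in $\Theta^{O,\mb{m}}_{L,i} \cup \bigcup_{j}\bigcup_{\lambda \in \Sigma^i_j}\phi^\lambda_j$. Each $\phi^\lambda_j$ is irreducible by the Lemma, and the dimension formula computed above gives $\dim \phi^\lambda_j = j + 2r + |\lambda| - 1 = 2r + N - 1$, using $|\lambda| = N - j$; likewise $\Theta^{O,\mb{m}}_{L,i}$ is irreducible of dimension $2r+N-1$ by the same count $\mb{m}_i + 1 + \sum_{k \neq i}(\mb{m}_k + 2)$ as for $\Theta^{P,\mb{m}}_{L,i}$. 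Thus every top-dimensional component of $\hat{F}_0$ is one of these loci, and
$$[\hat{F}_0] = a\,[\Theta^{O,\mb{m}}_{L,i}] + \sum_{j=1}^r \sum_{\lambda \in \Sigma^i_j} c_j^\lambda\,[\phi^\lambda_j],$$
where $a$ and $c_j^\lambda$ are the lengths of $\mathcal{O}_{\hat{F}_0}$ at the corresponding generic points. These multiplicities are nonnegative integers because $\hat{F}_0$ is an effective cycle. Combining this with the equivalence of the previous paragraph yields the statement.

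Given the proposition, the corollary is essentially formal, so there is no single hard step; the only points demanding care are the dimension matchings, ensuring that each locus in the list genuinely occurs as a top-dimensional component rather than being swallowed into a lower-dimensional boundary, and in particular the claim that $\Theta^{O,\mb{m}}_{L,i}$ keeps full dimension $2r+N-1$ even though $O$ now lies on $L$. Both are settled by the explicit degree-of-freedom counts already available. Nonnegativity of $a$ and the $c_j^\lambda$ requires no separate argument beyond the effectivity of the special fiber of a flat family.
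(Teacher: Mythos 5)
Your argument is correct and is precisely the standard specialization argument that the paper leaves implicit: the corollary is stated without separate proof as an immediate consequence of Proposition \ref{prop:firstdeg}, using rational equivalence of the fibers of the flat family $\hat{F}$, effectivity of the special fiber for nonnegativity of the multiplicities, and the dimension counts (all listed loci are irreducible of dimension $N+2r-1$) that the paper records alongside the definitions and inside the proof of the proposition. Nothing in your write-up diverges from the intended route.
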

Notice that every scheme in the component $\Theta^{O,\mb{m}}_{L,i}$ must have a subscheme of length $\mb{m}_i$ contained in $L$.
  
For an example, when the general fiber of the family is $\Theta^{P,(2,1,1)}_{L,2}$, Proposition \ref{prop:firstdeg} says that the special fiber is supported on the irreducible components given by the $\Sigma^2_j$. See Figure \ref{fig:componentexample2} for pictures, but note that, for example, the locus $\phi^{(0,0,0)}_{4}$ does not appear.
\begin{figure}[H]
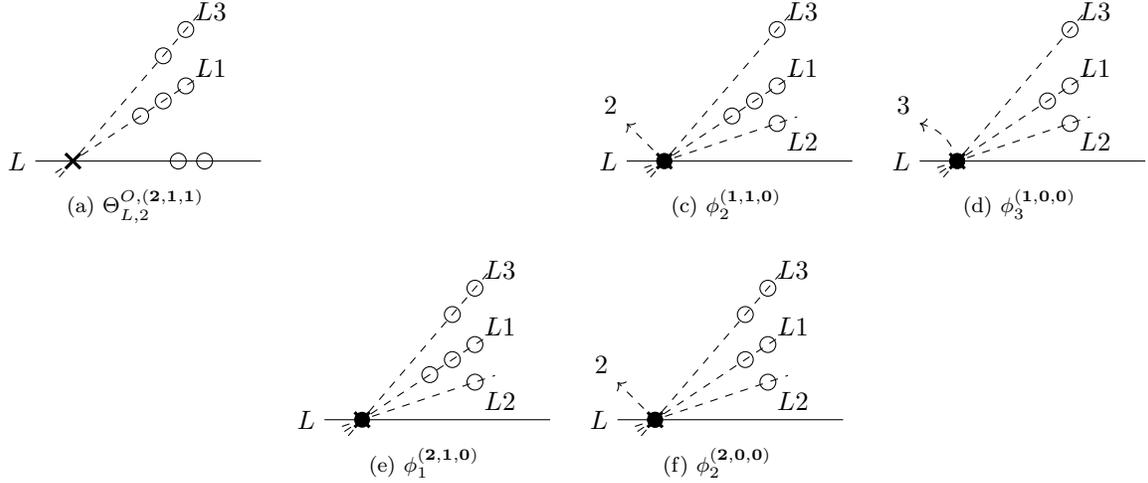

    \centering
    
    \subfloat[$\Theta^{O,\mb{(2,1,1)}}_{L,2}$]{\input{figures/pieri/componentex-deg1/compex1}}\ \,
    \subfloat{\hspace{.24\textwidth}}
    \subfloat[$\phi^{\mb{(1,1,0)}}_{2}$]{\input{figures/pieri/componentex-deg1/compex3}}\ \,
    \subfloat[$\phi^{\mb{(1,0,0)}}_{3}$]{\input{figures/pieri/componentex-deg1/compex4}}\\

    \subfloat[$\phi^{\mb{(2,1,0)}}_{1}$]{\input{figures/pieri/componentex-deg1/compex5}}\ \,
    \subfloat[$\phi^{\mb{(2,0,0)}}_{2}$]{\input{figures/pieri/componentex-deg1/compex6}}\ \,

    \caption{The irreducible components of the support of the special fiber.}
    \label{fig:componentexample2}
\end{figure}

We will use the following lemma in the proof of Proposition \ref{prop:firstdeg}.
\begin{lem}\label{lem:multone}
A general nonreduced subscheme $Z$ of length $k > 0$ supported at the origin has length one along any general line $L$ through the origin.
\end{lem}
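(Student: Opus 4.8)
The plan is to reduce to the case that $Z$ is \emph{curvilinear} and then to bound the intersection length by comparison with a smooth curve containing $Z$.

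First I would recall that the locus of length-$k$ subschemes supported at the origin is the Briançon scheme, which is irreducible of dimension $k-1$, and that the curvilinear subschemes --- those contained in a smooth curve germ through the origin --- form a dense open subset of it. Hence a general such $Z$ lies on a smooth curve $C$, so that $I_C \subseteq I_Z$ and $\mc{O}_Z \isom k[t]/(t^k)$. The point of passing to the curvilinear case is that $C$ has a single well-defined tangent line at the origin, which will be the only excluded direction.

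Next I would exploit the inclusion $Z \subseteq C$. Writing $T$ for the tangent line to $C$ at the origin, any line $L$ through the origin with $L \neq T$ meets $C$ transversally, so the length of $C \cap L$ at the origin is $1$. Since $I_C \subseteq I_Z$ gives $I_C + I_L \subseteq I_Z + I_L$, the scheme $Z \cap L$ is a closed subscheme of $C \cap L$; as each of $Z$ and $L$ contains the origin, the length of $Z \cap L$ is at least $1$ and at most the length of $C \cap L$, hence exactly $1$. Finally, the condition $L \neq T$ holds for a general line through the origin, as it excludes only a single point of the $\PP^1$ of such lines, which is precisely the assertion.

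I expect the only real content to be making the reduction to the curvilinear case precise, and this is classical. If one prefers to avoid citing it, the whole statement can be verified by an explicit coordinate computation: writing $C = \set{y = g(x)}$ with $\mathrm{ord}_0\, g \geq 2$, a general curvilinear $Z$ has $I_Z = (y - g(x),\, x^k)$, and for $L = \set{y = \lambda x}$ with $\lambda \neq 0$ one finds $I_Z + I_L = (x,y)$, the maximal ideal, so that $Z \cap L$ has length $1$ --- while the tangent direction $\lambda = 0$ instead yields length at least $2$, confirming that the tangent line is the sole exceptional case. The length estimate itself is immediate from the containment $Z \subseteq C$, so the main (and mild) obstacle is simply the generic-transversality/density bookkeeping.
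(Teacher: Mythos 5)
Your proposal is correct and follows essentially the same route as the paper: both reduce to the fact that a general length-$k$ punctual subscheme is curvilinear (contained in a smooth curve through the origin, which the paper takes from Brian\c{c}on), and then observe that a general line through the origin meets that curve transversally, so the induced subscheme has length one. Your version is in fact slightly more careful on one point --- the paper asserts $\ell(Z\cap L) = I_O(L,\zeta)$ where only the inequality $\ell(Z\cap L)\le I_O(L,\zeta)$ coming from $Z\subseteq\zeta$ is literally justified, whereas you correctly pair that upper bound with the lower bound $\ell(Z\cap L)\ge 1$ from the common support at the origin.
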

\begin{proof}
The general subscheme $Z$ of length $k$ supported at the origin is contained in a smooth curve $\zeta$ of degree $k-1$ \cite{B}. The length $\ell(Z \cap L)$ of the scheme along the line $L$ is the intersection multiplicity $I_O(L,\zeta)$ at the origin. Since the line $L$ is general, this is readily seen to be one.
\end{proof}
\begin{proof}[Proof of Proposition \ref{prop:firstdeg}]
Let $Z_0$ be a general point of the special fiber $\hat{F}_0$ and let $\gamma$ be a curve in $F$ specializing to $Z_0$ such that the general point $\gamma_t$ is contained in $F_t$. The general $\gamma_t$ contains subschemes $\gamma_{t,j}$ of length $\mb{m}_j + 1$ for $1 \leq j \leq r$ such that $\gamma_{t,j}$ spans a line $L_{t,j}$ through $P_t$. Furthermore, the support of the subscheme $\gamma_{t,j}$ meets $L$. Each family of lines $\set{L_{t,j}}$ specializes to a line $L_{0,j}$ through the origin $O$, and the support of $Z_0$ is contained in the union of these lines with the lengths $\ell(Z_0 \cap L_{0,j})$ of $Z_0$ along each line $L_j$ at least $\mb{m}_j + 1$. It follows that if the support of $Z_0$ does not contain the origin, then $Z_0 \in \Theta^{O,\mb{m}}_{L,i}$. Otherwise, $Z_0$ contains a subscheme of length $l > 0$ supported at the origin and subschemes $Z_j$ of length $\mb{m}_j - l_j + 1$ disjoint from and collinear with the origin such that $\sum l_j = l$. 

With that in mind, let $C$ be a component of $\hat{F}_0$ described by a partition $\mb{m}^\prime = (\mb{m}_1 - l_1, \mb{m}_2 - l_2, \ldots, \mb{m}_r - l_r)$ for nonnegative integers $l_i \leq \mb{m}_i$ such that $C$ is contained in the locus $\phi_l^{\mb{m}^\prime}$. It follows that 
$$\dim C \leq l-1 + \sum_{j=1}^r(\mb{m}^\prime_j + 2) = l-1 + \sum_{j=1}^r(\mb{m}_j - l_j + 2) = N + 2r -1$$
with equality holding if and only if $C = \phi_l^{\mb{m}^\prime}$. The general scheme $F_t$ in the family has dimension $N + 2r -1$, so $C$ must too, and therefore $C = \phi_l^{\mb{m}^\prime}$.  

Now, the general scheme $Z \in \phi_l^{\mb{m}^\prime}$ consists of subschemes $Z_j$ of $\mb{m}_j^\prime$ distinct points spanning a line $L_j$ through the origin and a general length $l$ subscheme $Z_O$ at the origin. By Lemma \ref{lem:multone}, the length of $Z_O$ along any of the lines $L_j$ is one, so that
$$\mb{m}_j + 1 \leq \ell(Z\cap L_j) = (\mb{m}_j^\prime + 1) + \ell(Z_O \cap L_j) = (\mb{m}_j - l_j + 1) + 1,$$
and it follows that $l_j \leq 1$. 

Additionally, since $Z$ is the limit of some curve in $F$ whose general member is contained in $F_t$, the subscheme $Z_i \subseteq Z$ is the limit of subschemes $Z_{t,i}$ all meeting $L$ and therefore must meet $L$ also. But then the support of $Z_i$ must meet the origin for general $Z$.
\end{proof}

\subsection{The Second Degeneration}

Now let $q$ be a positive integer, let $Q_t$ be the point $(0,t)$ in the plane, let $Q = Q_1 = (0,1)$ and let $O$ be the origin. Consider the family $F$ in $\mathbb{P}^{2[N+q+r]} \times \mathbb{C}^*$ such that the fiber $F_t$ over $t \in \mathbb{C}^*$ is $\Phi^{O,\mb{m}}_{Q_t,q}$. The closure $\hat{F}$ of $F$ in $\mathbb{P}^{2[N+q+r]} \times \mathbb{C}$ is a flat family with special fiber $\hat{F}_0$. 

\begin{prop} \label{prop:maindeg}
The support of the special fiber $\hat{F}_0$ is contained in the union
$$\bigcup_{j = 0}^r \bigcup_{\lambda \in \Sigma_j} \phi^\lambda_{q+j}.$$
\end{prop}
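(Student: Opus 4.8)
The plan is to mimic the structure of the proof of Proposition \ref{prop:firstdeg}, adapting it to track the fixed subscheme at $O$ and the moving subscheme at $Q_t$. First I would take a general point $Z_0$ of the special fiber $\hat{F}_0$ and choose a curve $\gamma$ in $F$ specializing to it, with general member $\gamma_t \in F_t = \Phi^{O,\mb{m}}_{Q_t,q}$. Each such $\gamma_t$ carries a length-$q$ nonreduced subscheme supported at $Q_t$ together with subschemes $\gamma_{t,j}$ of length $\mb{m}_j + 1$ collinear with $O$ and spanning lines $L_{t,j}$ through $O$. As $t \to 0$, the point $Q_t = (0,t)$ specializes to $O$, so the length-$q$ piece at $Q_t$ collides with $O$; the lines $L_{t,j}$ specialize to lines $L_{0,j}$ through $O$. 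The key observation is that the length of $Z_0$ along each line $L_{0,j}$ remains at least $\mb{m}_j + 1$, and now the subscheme at $O$ has total length at least $q$ (from the colliding piece) plus whatever $l_j$ of the collinear points collapse onto the origin.

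Next I would set up the dimension count. Write a component $C$ of $\hat{F}_0$ as being contained in $\phi^{\mb{m}'}_{q+l}$, where $\mb{m}' = (\mb{m}_1 - l_1, \ldots, \mb{m}_r - l_r)$ records how many of the collinear points on each line have collapsed into the origin, and $l = \sum l_j$ is the total collapsed length, so the subscheme at $O$ has length $q + l$. Using the dimension formula for $\phi^{\mb{m}}_q$ established earlier in this section, I would compute
$$\dim C \le (q + l - 1) + \sum_{j=1}^r (\mb{m}'_j + 2) = (q + l - 1) + \sum_{j=1}^r (\mb{m}_j - l_j + 2) = q + N + 2r - 1.$$
Since the general fiber $\Phi^{O,\mb{m}}_{Q_t,q}$ has dimension $q + 2r + N - 1$ and $\hat{F}$ is flat, every component $C$ of the special fiber has this same dimension, forcing equality and hence $C = \phi^{\mb{m}'}_{q+l}$.

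Finally I would bound the $l_j$ to pin down which partitions $\lambda \in \Sigma_j$ actually occur. Invoking Lemma \ref{lem:multone}, the general length-$(q+l)$ subscheme at $O$ meets each general line $L_j$ in length one, so the inequality
$$\mb{m}_j + 1 \le \ell(Z \cap L_j) = (\mb{m}'_j + 1) + \ell(Z_O \cap L_j) = (\mb{m}_j - l_j + 1) + 1$$
gives $l_j \le 1$ for every $j$. Thus $\mb{m}'$ is obtained from $\mb{m}$ by subtracting one from some subset of its entries, i.e.\ $\mb{m}' \in \Sigma_l$, and the colliding subscheme contributes $q$ to the origin length, so $C = \phi^{\mb{m}'}_{q+l}$ with $\mb{m}' \in \Sigma_l$ for some $0 \le l \le r$. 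This is exactly the claimed union. I expect the main obstacle to be the bookkeeping at the origin: unlike the first degeneration, the length-$q$ piece from $Q_t$ and the $l$ collapsing points must be shown to combine into a single length-$(q+l)$ subscheme at $O$ of the right form, and one must verify that Lemma \ref{lem:multone} still applies to this combined subscheme and that no constraint analogous to "always beginning with $\mb{m}_i$" is imposed here (which is why the index set is the full $\Sigma_j$ rather than $\Sigma_j^i$). Establishing the flatness and the openness needed to guarantee the general fiber dimension is attained on each component is the remaining technical point.
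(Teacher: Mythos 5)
Your proposal matches the paper's proof essentially step for step: the same choice of a specializing curve, the same dimension count forcing each component $C$ to equal $\phi^{\mb{m}'}_{q+l}$, and the same application of Lemma \ref{lem:multone} to the length-$(q+l)$ subscheme at the origin to conclude $l_j \le 1$. The technical caveats you flag at the end (the combined subscheme at $O$ and the flatness of $\hat{F}$) are handled implicitly in the paper in exactly the way you anticipate, so there is nothing further to add.
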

\begin{cor}\label{cor:equiv2}
There is an equivalence of cycles 
$$\left[\Phi^{O,\mb{m}}_{Q,q}\right] \sim \sum_{j=0}^r \sum_{\lambda \in \Sigma_j} c_j^\lambda \left[\phi^\lambda_{q+j}\right]$$
for nonnegative integers $c_j^\lambda$. 
\end{cor}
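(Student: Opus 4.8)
The plan is to mirror the proof of Proposition \ref{prop:firstdeg} almost verbatim, with the point $Q_t$ now playing the role that $P_t$ played there and with the initial length-$q$ subscheme supplying a ``seed'' at the origin. First I would let $Z_0$ be a general point of an irreducible component $C$ of the special fiber $\hat{F}_0$, and choose a curve $\gamma \subseteq F$ specializing to $Z_0$ whose general member $\gamma_t$ lies in $F_t = \Phi^{O,\mb{m}}_{Q_t,q}$. Each $\gamma_t$ carries a length-$q$ subscheme supported at $Q_t$ together with collinear subschemes $\gamma_{t,j}$ of length $\mb{m}_j + 1$ spanning lines $L_{t,j}$ through $O$ and avoiding $O$. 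As $t \to 0$ the point $Q_t \to O$, so the family of length-$q$ subschemes limits to a length-$q$ subscheme supported at the origin, while each $L_{t,j}$ limits to a line $L_{0,j}$ through $O$. Since $\gamma_{t,j} \subseteq L_{t,j}$, the flat limit of $\gamma_{t,j}$ is a length-$(\mb{m}_j+1)$ subscheme of $Z_0$ lying on $L_{0,j}$, so that $\ell(Z_0 \cap L_{0,j}) \geq \mb{m}_j + 1$.

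Next I would record the structure of $Z_0$. Writing $Z_O$ for the part of $Z_0$ supported at the origin, conservation of total length under the flat family forces $Z_O$ to have length $q + l$, where $l \geq 0$ counts the collinear points absorbed by the seed; away from the origin, line $L_{0,j}$ then carries $\mb{m}_j - l_j + 1$ free points with $\sum_j l_j = l$. Hence $Z_0 \in \phi^{\mb{m}'}_{q+l}$ for $\mb{m}' = (\mb{m}_1 - l_1, \ldots, \mb{m}_r - l_r)$, and so $C \subseteq \phi^{\mb{m}'}_{q+l}$. A dimension count identical in spirit to that of Proposition \ref{prop:firstdeg} gives
$$\dim C \leq (q + l - 1) + \sum_{j=1}^r (\mb{m}'_j + 2) = (q + l - 1) + (N - l + 2r) = q + N + 2r - 1,$$
with equality if and only if $C = \phi^{\mb{m}'}_{q+l}$. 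Since $\hat{F}$ is flat, $C$ has the same dimension as the general fiber $F_t$, namely $q + N + 2r - 1$, and since $\phi^{\mb{m}'}_{q+l}$ is irreducible (by the lemma preceding the definition of $\Sigma_j$) I conclude $C = \phi^{\mb{m}'}_{q+l}$.

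It then remains to bound the $l_j$. Because $C = \phi^{\mb{m}'}_{q+l}$, the general member $Z_0$ has a \emph{general} length-$(q+l)$ subscheme $Z_O$ at the origin, so Lemma \ref{lem:multone} applies and $\ell(Z_O \cap L_{0,j}) = 1$ for each $j$. Thus $\ell(Z_0 \cap L_{0,j}) = (\mb{m}'_j + 1) + 1 = \mb{m}'_j + 2$, and comparing with the lower bound $\ell(Z_0 \cap L_{0,j}) \geq \mb{m}_j + 1$ forces $l_j = \mb{m}_j - \mb{m}'_j \leq 1$. Therefore $\mb{m}' = \lambda$ is obtained from $\mb{m}$ by subtracting one at most once from each entry a total of $l$ times, i.e. $\lambda \in \Sigma_l$ with $0 \leq l \leq r$, giving $C = \phi^\lambda_{q+l}$ as claimed. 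Unlike in Proposition \ref{prop:firstdeg}, no distinguished line is forced to meet a fixed line $L$, so no reduction is forced to occur in a prescribed entry and the full sets $\Sigma_j$ appear rather than the truncated $\Sigma^i_j$; correspondingly there is no $\Theta$-type component, since the seed always lands on the origin.

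The step I expect to be the main obstacle is the genericity bookkeeping at the origin: a priori the limit subscheme $Z_O$ acquires tangent directions along precisely the lines $L_{0,j}$ from which points were absorbed, which threatens the hypothesis of Lemma \ref{lem:multone}. The point that resolves this is that the dimension count already pins down $C$ as the \emph{entire} irreducible locus $\phi^{\mb{m}'}_{q+l}$, so a general point of $C$ is a general point of $\phi^{\mb{m}'}_{q+l}$ and in particular has a general $Z_O$; the special members with extra tangency along some $L_{0,j}$ form a proper, hence lower-dimensional, subvariety and do not govern the generic behavior. Making the interplay between ``$Z_0$ is a general point of $C$'' and ``$Z_0$ is a flat limit along $\gamma$'' precise, so that both the length lower bound and the multiplicity-one conclusion hold for the \emph{same} $Z_0$, is the only delicate part of the argument.
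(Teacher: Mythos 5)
Your argument reproduces the paper's proof of Proposition \ref{prop:maindeg} essentially verbatim: the same limiting of the lines $L_{t,j}$ and of $Q_t$ to the origin, the same dimension count pinning each component down to a full locus $\phi^{\mb{m}'}_{q+l}$, and the same use of Lemma \ref{lem:multone} on a general point to force $l_j \leq 1$, from which the corollary follows by flatness. The proposal is correct and takes the same route as the paper.
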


For example, when the general fiber of the family is $\Phi^{O,(2,1,1)}_{Q,2}$, Proposition \ref{prop:maindeg} says that the special fiber $\hat{F}_0$ is supported on the irreducible components given by the $\Sigma_j$ in Figure \ref{fig:youngexample}. See Figure \ref{fig:componentexample} for the pictures of each locus, but note that, for instance, the locus $\phi^{(0,0,0)}_{6}$ does not appear.

\begin{figure}[H]
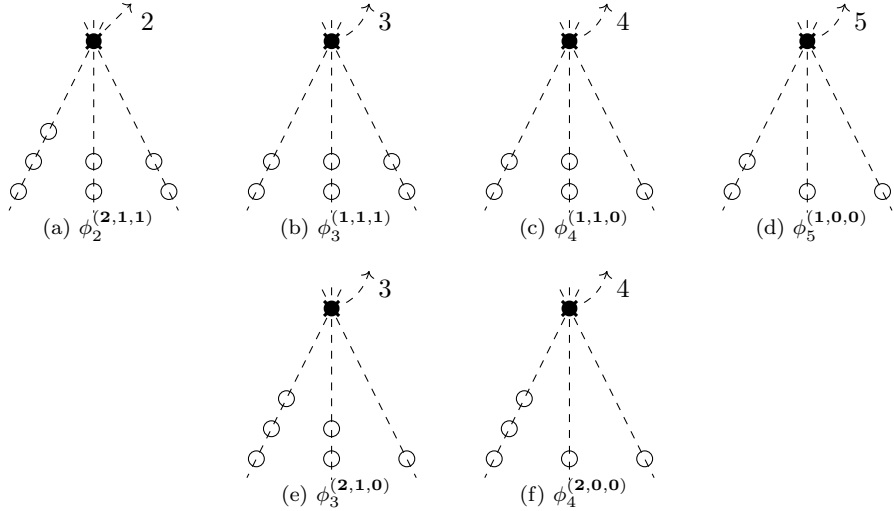

    \centering
    
    \subfloat[$\phi^{\mb{(2,1,1)}}_{2}$]{\input{figures/pieri/componentex-deg2/compex1}}\ \qquad
    \subfloat[$\phi^{\mb{(1,1,1)}}_{3}$]{\input{figures/pieri/componentex-deg2/compex2}}\ \qquad
    \subfloat[$\phi^{\mb{(1,1,0)}}_{4}$]{\input{figures/pieri/componentex-deg2/compex3}}\ \qquad
    \subfloat[$\phi^{\mb{(1,0,0)}}_{5}$]{\input{figures/pieri/componentex-deg2/compex4}}\\
    
    \subfloat[$\phi^{\mb{(2,1,0)}}_{3}$]{\input{figures/pieri/componentex-deg2/compex5}}\ \qquad
    \subfloat[$\phi^{\mb{(2,0,0)}}_{4}$]{\input{figures/pieri/componentex-deg2/compex6}}

    \caption{The irreducible components of the support of the special fiber.}
    \label{fig:componentexample}
\end{figure}

\begin{proof}[Proof of Proposition \ref{prop:maindeg}.]
Let $Z_0$ be a general point of the special fiber $\hat{F}_0$ and let $\gamma$ be a curve in $F$ specializing to $Z_0$ such that the general point $\gamma_t$ is contained in $F_t$. The general $\gamma_t$ contains subschemes $\gamma_{t,i}$ of length $\mb{m}_i + 1$ for $1 \leq i \leq r$ such that $\gamma_{t,i}$ spans a line $L_{t,i}$ through the origin and a subscheme $\gamma_{t,Q_t}$ of length $q$ supported at $Q_t$. The family $L_{t,i}$ for each $i$ specializes to a line $L_{0,i}$ and $Q_t$ specializes to the origin $O$, so that the support of $Z_0$ is contained in $\set{O}\cup \bigcup_{i=1}^r L_{0,i}$. Furthermore, the length of $Z_0$ along any of the $L_{0,i}$ is $\ell(Z_0 \cap L_{0,i}) \geq \mb{m}_i + 1$ and the length of $Z_0$ at $O$ is $\ell(Z_0)_O \geq q$. 

The components of $\hat{F}_0$ therefore consist of loci comprised of schemes containing subschemes whose support is disjoint from the origin, but which are collinear with the origin, of length less than or equal to $\mb{m}_i$ for each $1 \leq i \leq r$ and containing a subscheme of length $q + j$ at the origin for $j \geq 0$. With that in mind, let $C$ be a component of $\hat{F}_0$ described by a partition $\mb{m}^\prime = (\mb{m}_1 - l_1, \mb{m}_2 - l_2, \ldots, \mb{m}_r - l_r)$ for nonnegative integers $l_i$, and let $l$ be the sum of the $l_i$ such that $C$ is the contained in the locus $\phi^{\mb{m}^\prime}_{q+l}$. It follows that 

$$\dim C \leq (q + l - 1) + \sum_{i=1}^r (\mb{m}^\prime_i + 2) = (q + l - 1) + \sum_{i=1}^r (\mb{m}_i - l_i + 2) = q + N + 2r -1.$$

Equality holds if and only if $C = \phi^{\mb{m}^\prime}_{q+l}$. Since the dimension of each fiber $F_t$ has dimension $q + N + 2r - 1$, the dimension of $C$ must be $q + N + 2r - 1$. Hence, the general point $Z_0 \in C$ contains subschemes $Z_{0,i}$ consisting of $\mb{m}_i^\prime + 1$ distinct points collinear with the origin and a general subscheme $Z_{0,O}$ of length $q + l$ supported at the origin.

Assume $Z_0$ is a general point of $C$. As before, let $L_{0,i}$ be the lines through the origin spanned by the subschemes $Z_{0,i}$ and $Z_{0,O}$ be the subscheme of length $q+l$ supported at the origin. By Lemma \ref{lem:multone}, $\ell(Z_{0,O} \cap L_{0,i}) = 1$ for each $i$. On the other hand,
$$\mb{m}_i + 1 \leq \ell(Z_{0} \cap L_{0,i}) = (\mb{m}^\prime_i + 1) + \ell(Z_{0,O} \cap L_{0,i}) = (\mb{m}_i - l_i + 1) + 1,$$
and it follows that $l_i \leq 1$, as desired.
\end{proof}

\subsection{Determining the Multiplicities} \label{sec:coefs}

Fix an integer $j \geq 0$ and let $\lambda$ be a partition of a positive integer $M$ possibly containing zeros as our convention allows. Each component $\phi_{j}^\lambda$ naturally determines a class which we use to determine the multiplicity of $\phi_{j}^\lambda$ in the each degeneration. Let $r$ be the length of $\lambda$, and fix $r$ general points $P_1,\ldots, P_r$ and a general point $Q$. Let $U_j^\lambda$ be the locus of schemes which contain a nonreduced subscheme of length $\lambda_i + 1$ at $P_i$ and which contain a subscheme of length $j$ collinear with $Q$ supported away from the $P_i$'s. See Figure \ref{fig:pairingclass}. We will pair both sides with the class $\tau = [\overline{U_j^\lambda}]$ to determine the multiplicity of $\phi_{j}^\lambda$.

\begin{figure}[h]
    \centering
    \subfloat{\input{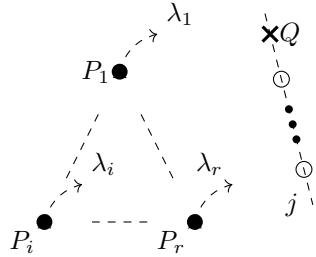}}
    \caption{A picture of the locus $U_j^\lambda$.}
    \label{fig:pairingclass}
\end{figure}

\begin{lem} \label{lem:maincoef}
The intersection $[\phi^\lambda_j] \cdot \tau = 1$, and the intersection $[\phi^{\lambda^\prime}_k] \cdot \tau = 0$ for any other component $\phi^{\lambda^\prime}_k$ of the special fiber of the degenerations in Proposition \ref{prop:firstdeg} and Proposition \ref{prop:maindeg}. 
\end{lem}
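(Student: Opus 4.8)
The plan is to compute both pairings as honest intersection numbers of cycles of complementary dimension and then to show that the relevant cycles meet set-theoretically in a single point in the matching case and not at all otherwise. A direct count confirms complementarity: writing $\lambda$ for a partition of $M$ of length $r$, one has $\dim \phi^\lambda_j = (j-1)+M+2r$ and $\dim \overline{U^\lambda_j} = M+j+1$, and these sum to $2(M+j+r) = \dim \mathbb{P}^{2[M+j+r]}$. Since $\mathbb{P}^{2[n]}$ is smooth, it then suffices to exhibit the intersection point and to verify that the two tangent spaces are complementary there; transversality upgrades the set-theoretic count to the scheme-theoretic intersection number.

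For the first statement I would first locate the unique point of $\phi^\lambda_j \cap \overline{U^\lambda_j}$. If $Z$ lies in both, then its positive length at the fixed general point $P_i$ forces one of the $r$ moving lines of $\phi^\lambda_j$ to pass through $P_i$; since that line also passes through the origin $O$, it must be the fixed line $\overline{OP_i}$, and by generality these $r$ lines are distinct, so they are exactly $\overline{OP_1},\dots,\overline{OP_r}$. The subscheme carried by $\overline{OP_i}$ then has its full length $\lambda_i+1$ concentrated at $P_i$, hence is the unique length-$(\lambda_i+1)$ subscheme of $\overline{OP_i}$ supported at $P_i$. Likewise the length-$j$ subscheme at $O$ must be collinear with $Q$, forcing it to be the unique length-$j$ subscheme of $\overline{OQ}$ supported at $O$. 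This determines $Z$ uniquely, and conversely this $Z$ is a limit of the general member of each family, so it lies in both closures.

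Transversality at this point is where the real work lies, because $Z$ is nonreduced at $P_1,\dots,P_r$ and at $O$. Since these support points are distinct and $\mathbb{P}^{2[n]}$ is smooth, the tangent space $\Hom(I_Z,\mathcal{O}_Z)$ splits as a direct sum over the support points, so I would check complementarity factor by factor. At the $P_i$-factor the ambient tangent space has dimension $2(\lambda_i+1)$, while $\phi^\lambda_j$ contributes the $(\lambda_i+2)$-dimensional space of deformations along a line through $O$ that may both rotate about $O$ and carry a moving length-$(\lambda_i+1)$ divisor, and $\overline{U^\lambda_j}$ contributes the $\lambda_i$-dimensional punctual Hilbert scheme at the pinned point $P_i$; at the $O$-factor the ambient dimension $2j$ splits as $(j-1)+(j+1)$ in the same way. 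In each factor the two dimensions add up to the ambient dimension, so transversality reduces to showing the two subspaces meet only in $0$. This is clean: every tangent vector of the pinned family preserves its support point, whereas the only tangent vector of the other family preserving that support point is $0$, because rotating the line through $O$ or spreading the points along it moves the support, and the unique length-$\ell$ subscheme of a fixed line at a fixed point is rigid. Hence the intersection is transverse and $[\phi^\lambda_j]\cdot\tau = 1$.

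For the vanishing $[\phi^{\lambda'}_k]\cdot\tau = 0$ with $(\lambda',k)\neq(\lambda,j)$, I would show the intersection is empty by the same support analysis, now exploiting the generality of the $P_i$ and of $Q$ fully. As before the $r$ moving lines of $\phi^{\lambda'}_k$ are forced to be $\overline{OP_1},\dots,\overline{OP_r}$ up to a permutation $\sigma$, and comparing the required length $\lambda_i+1$ at $P_i$ with the length $\lambda'_{\sigma(i)}+1$ available on the corresponding line gives $\lambda_i\le \lambda'_{\sigma(i)}$. Conservation of total length reads $|\lambda'|+k=|\lambda|+j$, so the length of any common $Z$ lying away from the points $P_i$ is at most $j$. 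The length-$j$ subscheme collinear with $Q$ must occupy this part; since $Q$ is general it lies on none of the lines $\overline{OP_i}$ and $O,P_i,Q$ are never collinear, so a line through $Q$ meets the support $\{O\}\cup\bigcup\overline{OP_i}$ only at $O$, forcing this subscheme to be the scheme at $O$ and forcing all the length inequalities to be equalities. Thus $\lambda'=\lambda$ and $k=j$, contradicting $(\lambda',k)\neq(\lambda,j)$, so the intersection is empty and the pairing vanishes. The main obstacle throughout is the factor-by-factor transversality at the nonreduced points, which is precisely what turns the set-theoretic count into the multiplicity-one conclusion.
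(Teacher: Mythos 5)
Your proof is correct and follows essentially the same route as the paper's: locate the unique intersection point by a support-and-length analysis, check transversality factor by factor using the local product structure of the Hilbert scheme at a scheme with disjoint support, and show the mismatched intersections are empty. The only real difference is presentational: the paper packages the factor-wise transversality into an explicit coordinate computation (Lemma \ref{lem:techlemma}, where both loci become transverse linear subspaces in the standard chart at the ideal $(x^n,y)$), whereas you argue via tangent-space dimensions and a rigidity claim; your vanishing argument is also spelled out in more detail than the paper's one-line version (just take care that the line of the collinear-with-$Q$ piece is forced to be $\overline{OQ}$ because that piece must contain the length-$k$ part at $O$ --- an arbitrary line through $Q$ does meet the lines $\overline{OP_i}$ away from $O$).
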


In order to prove Lemma \ref{lem:maincoef} we will need the following.

\begin{lem} \label{lem:techlemma}
The intersection in $\mathbb{P}^{2[j]}$ of the class of the locus of schemes of length $j$ supported at a fixed point and the class of the locus of schemes of length $j$ collinear with a fixed point is the class of a single reduced point.
\end{lem}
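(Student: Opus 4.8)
The plan is to show that the two loci meet set-theoretically in a single point and that they meet transversally there, so the intersection class is a single reduced point. Write $A$ for the locus of length-$j$ schemes supported at a fixed point and $B$ for the locus of length-$j$ schemes collinear with a fixed point. Since these classes do not depend on the chosen points, I would take the two points $P\neq Q$ distinct and choose affine coordinates with $P$ the origin and $\overline{PQ}$ the line $L=\{y=0\}$. If $Z\in A\cap B$ then $Z$ lies on a line $\ell$ with $Q\in\ell$ and $\Supp Z=\{P\}\subseteq\ell$, which forces $\ell=\overline{PQ}=L$; and on $L\cong\mathbb{P}^1$ the only length-$j$ subscheme supported at $P$ is $Z_0=V(y,x^j)$. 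Thus $A\cap B=\{Z_0\}$ set-theoretically, and since $\dim A=j-1$ and $\dim B=j+1$ are complementary in $\mathbb{P}^{2[j]}$ the intersection is proper; it remains to check transversality at $Z_0$.

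For this I would compute tangent spaces inside $T_{Z_0}\mathbb{P}^{2[j]}=\Hom(I,\mathcal{O}/I)$, where $I=(x^j,y)$ and $\mathcal{O}/I=k[x]/(x^j)$. Because $x^j,y$ is a regular sequence, its only syzygy is the Koszul one, which imposes no condition modulo $I$, so a tangent vector is a free pair $(\phi(x^j),\phi(y))=(a,b)\in (k[x]/(x^j))^2$, recovering dimension $2j$. Parametrizing $A$ near $Z_0$ by the curvilinear schemes $V(y-h(x),x^j)$ with $h=\sum_{i=1}^{j-1}h_ix^i$ produces the tangent vectors $(0,x^i)$, so $T_{Z_0}A=\{(0,b):b(0)=0\}$, of dimension $j-1$. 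Parametrizing $B$ near $Z_0$ by the schemes $V(y-s(x-1),\,x^j-\sum_i c_ix^i)$ lying on the lines $\{y=s(x-1)\}$ through $Q$ produces the tangent vectors $(x^i,0)$ together with $(0,1-x)$, the last recording how tilting the line about $Q$ translates the fat point off $L$; hence $T_{Z_0}B=\{(a,b):a\text{ arbitrary},\ b\in\langle 1-x\rangle\}$, of dimension $j+1$. Both families are immersions with image open in the irreducible loci $A$ and $B$, and $Z_0$ is a smooth point of each: for $A$ this is the standard fact that curvilinear schemes are smooth points of the punctual Hilbert scheme, and for $B$ it is a projective bundle over the pencil of lines through $Q$ in a neighborhood of $Z_0$.

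Finally, $(a,b)\in T_{Z_0}A\cap T_{Z_0}B$ forces $a=0$ and $b=\mu(1-x)$ with $b(0)=\mu=0$, so $(a,b)=0$; as the two dimensions sum to $2j$ the intersection is transverse, whence $[A]\cdot[B]=[Z_0]=[\mathrm{pt}]$. The step I expect to be the main obstacle is the honest identification of these two tangent spaces inside $\Hom(I,\mathcal{O}/I)$ from the explicit deformations — in particular pinning down the moving-line direction $(0,1-x)$ and confirming that the displayed families genuinely dominate $A$ and $B$ near $Z_0$, so that the computed tangent spaces are the full ones. Once this is in place the transversality check is immediate; the case $j=1$ is trivial since then $A=\{P\}$ and $B=\mathbb{P}^2$.
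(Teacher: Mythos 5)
Your proof is correct and follows essentially the same route as the paper: both identify the unique intersection point as the scheme with ideal $(x^j,y)$ and then verify transversality by an explicit local computation in the standard chart whose coordinates are the coefficients of the two generators. Your tangent-space calculation inside $\Hom(I,\mathcal{O}/I)$ is the infinitesimal restatement of the paper's observation that the two loci are cut out in that chart by the complementary sets of linear equations $\gamma_i=c_0=0$ and $c_i=0\ (i\geq 2)$, $c_1=\pm c_0$, with the added (welcome) care of justifying that the displayed families compute the full tangent spaces.
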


\begin{figure}[H]
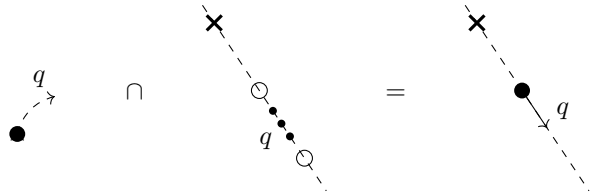

    \centering
    
    \raisebox{-.4\height}{\subfloat{\input{figures/pieri/techlemma/techlemma1}}}\ $\qquad\cap\qquad$
    \raisebox{-.4\height}{\subfloat{\input{figures/pieri/techlemma/techlemma2}}}\ $\qquad=\qquad$
    \raisebox{-.4\height}{\subfloat{\input{figures/pieri/techlemma/techlemma3}}}
    
    \caption{The intersection in Lemma \ref{lem:techlemma}.}
\end{figure}

\begin{proof}
Let the first class be that of locus of subschemes of length $q$ supported at the origin. Let the second class be that of the locus of subschemes collinear with $(-1,0)$. The unique subscheme in the intersection is given by the ideal $(x^j,y)$. There are charts for $\mathbb{P}^{2[j]}$ isomorphic to $\mathbb{A}^{2[j]}$ around this point with coordinates given by coefficients of generators for the ideals
$$(x^j - \gamma_{j-1} x^{j-1} - \cdots - \gamma_1 x - \gamma_0 , y - c_{j-1} x^{j-1} - \cdots - c_1 x - c_0).$$

The first locus is described by the equations $\gamma_i = c_0 = 0$ for all $0 \leq i \leq j-1$. The second locus can be described as those ideals containing the equation of a line through $(-1,0)$, and in particular has equations $c_i = 0$ for $2 \leq i \leq j-1$ and $c_1 = c_0$. It follows that the loci are locally distinct linear spaces of complementary codimension, so they intersect transversally. 
\end{proof}

\begin{proof}[Proof of Lemma \ref{lem:maincoef}.]
To check the first intersection, first set $\phi = \phi_{j}^\lambda$ and $U = U_j^\lambda$ from above. Observe that the points $P_i$ supporting the nonreduced subschemes contained in every scheme in $U$ must each be contained in a single subscheme collinear with the origin, $O$. Similarly, the origin, which supports the length $j$ subscheme common to every scheme in $\phi$, must be contained in the subscheme of $U$ collinear with $Q$. It follows that any scheme $Z \in U \cap \phi$ then contains subschemes of length $\lambda_i + 1$ contained in the lines $\langle O P_i \rangle$ and a subscheme of length $j$ supported at the origin contained in the line $\langle QO \rangle$. There is a unique such scheme, and it suffices to check that the intersection $U \cap \phi$ is transverse at $Z$. 

To that end, $\mathbb{P}^{2[N]}$ has charts centered at $Z$ isomorphic to $\mathbb{A}^{2[j]} \times \mathbb{A}^{2[\lambda_1+1]} \times \cdots \times \mathbb{A}^{2[\lambda_1+1]}$ and it suffices to check transversality locally on each factor. Transversality of the intersection then follows from Lemma \ref{lem:techlemma}.

If $\lambda^\prime \neq \lambda$, then the intersection $\phi^{\lambda^\prime}_k \cap U$, with $U$ as above, is empty since any scheme in the intersection is supported only at the points $P_i$ and the origin. The intersection $\Theta^{O,\mb{m}}_{L,i} \cap U$ is empty since none of the points $P_i$ are contained in $L$. 
\end{proof}

Let $\mb{m}$ and $\lambda$ be partitions such that $\lambda$ is obtained by subtracting one from $j$ entries of $\mb{m}$, as in the statements of Corollaries \ref{cor:equiv1} and \ref{cor:equiv2}. Let $c(\mb{m},\lambda)$ be the number of ways to obtain $\mb{m}$ from $\lambda$ by adding one to $j$ entries of $\lambda$ and then reordering if necessary. 

For example, when $\mb{m} = (2,1,1)$, the $c(\mb{m},\lambda)$ for the possible $\lambda$'s (rf. Figure \ref{fig:youngexample}) are listed in Table \ref{tab:cmlam} below.
\begin{center}
\begin{tabular}{c|c} \label{tab:cmlam}
    $\lambda$ & $c(\mb{m},\lambda)$ \\ \hline
     (1,1,1) & 3\\
     (2,1,0) & 1\\
     (1,1,0) & 2\\
     (2,0,0) & 1\\
     (1,0,0) & 1
\end{tabular}
\end{center}
Notice that, for instance, the first row is three since one can be added to any of the three entries of $\lambda$ and then reordered to obtain $\mb{m}$. Also, as our convention allows, we must consider the partitions $\lambda$ with their entries which are zero.    

\begin{prop} \label{prop:coef}
Let $\mb{m}$ and $\lambda$ be the partitions as in Corollary \ref{cor:equiv1} or Corollary \ref{cor:equiv2}. Let $\ell$ be the number of entries of $\lambda$ equal to $\mb{m}_i - 1$. Let $\hat{\mb{m}}$ be the partition $\mb{m}$ with the entry $\mb{m}_i$ omitted, and let $\hat{\lambda}$ be the partition $\lambda$ with one of its entries equal to $\mb{m}_i - 1$ omitted. The coefficient $c_j^\lambda$ of $\phi^\lambda_j$ in Corollary \ref{cor:equiv1} is
$$c_j^\lambda = \ell \cdot c(\hat{\mb{m}},\hat{\lambda}),$$ and in Corollary \ref{cor:equiv2} is 
$$c_j^\lambda = c(\mb{m},\lambda).$$
\end{prop}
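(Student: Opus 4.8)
The plan is to reduce the computation of each coefficient to a single intersection number and then evaluate that number by a direct count of the schemes occurring in the intersection. Fix the component we are interested in and pair both sides of the equivalence in Corollary \ref{cor:equiv1} (resp. Corollary \ref{cor:equiv2}) with the class $\tau = [\overline{U^\lambda_j}]$ (resp. $\tau = [\overline{U^\lambda_{q+j}}]$). By Lemma \ref{lem:maincoef} every term on the right-hand side pairs to zero against $\tau$ except the single coefficient we want, which pairs to $1$; the proof of that lemma also records that $[\Theta^{O,\mb{m}}_{L,i}]\cdot\tau = 0$. Hence
$$c_j^\lambda = [\Theta^{P,\mb{m}}_{L,i}]\cdot\tau \qquad\text{and}\qquad c_j^\lambda = [\Phi^{O,\mb{m}}_{Q,q}]\cdot\tau$$
in the two cases, and a dimension count shows the paired classes are complementary, so each number is a count of schemes in the intersection weighted by local multiplicities.

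Next I would analyze the schemes $Z$ in such an intersection. Since the points $P_1,\dots,P_r$ defining $U$ are general, the nonreduced subscheme of length $\lambda_k+1$ carried by $Z$ at $P_k$ must be a piece of one of the subschemes collinear with the fixed point ($P$ or $O$); this forces the spanning lines to be $\overline{PP_k}$ (resp. $\overline{OP_k}$) and encodes the data of $Z$ in a bijection matching the $P_k$ to the parts of $\mb{m}$. Along each such line $Z$ consists of the curvilinear length-$(\lambda_k+1)$ subscheme at $P_k$ together with at most one further point, located where the line meets the line $m$ spanning the length-$j$ (resp. length-$(q+j)$) subscheme collinear with the $U$-point $Q'$. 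Writing $\mb{m}'+1$ for the length of $Z$ along this line, it contributes a point exactly when $\mb{m}' = \lambda_k + 1$ and contributes none when $\mb{m}' = \lambda_k$; to assemble the collinear subscheme, precisely $j$ of the lines must contribute.

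For Corollary \ref{cor:equiv2} the line $m$ is pinned down as $\overline{QQ'}$, since the length-$q$ subscheme supported at the fixed point $Q$ can only lie on the collinear-with-$Q'$ subscheme. The admissible configurations are then exactly the ways to add one box to $j$ parts of $\lambda$ so that the result rearranges to $\mb{m}$, i.e. $c(\mb{m},\lambda)$. For Corollary \ref{cor:equiv1} there is no such distinguished point, but the requirement that the support of $Z_i$ meet $L$ plays the same role: the point of the length-$(\mb{m}_i+1)$ subscheme lying on $L$ is a non-$P_k$ point, hence lies on $m$, so $m$ is forced to pass through $Q'$ and through the intersection of $L$ with the line carrying $Z_i$. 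That special line is therefore a contributing line matched to the part $\mb{m}_i$, which forces its $U$-point to satisfy $\lambda_k = \mb{m}_i - 1$; there are $\ell$ such choices. Removing this line matches the remaining data exactly as before, contributing $c(\hat{\mb{m}},\hat{\lambda})$, for a total of $\ell\cdot c(\hat{\mb{m}},\hat{\lambda})$.

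The hard part will be showing that each combinatorial configuration contributes with multiplicity exactly one, namely that the intersection is transverse at each isolated point and that nothing is hidden in the boundary of the closures. The transversality should reduce, factor by factor on a product chart $\mathbb{A}^{2[\,\cdot\,]}\times\cdots$ centered at $Z$, to the local computation of Lemma \ref{lem:techlemma}, just as in the proof of Lemma \ref{lem:maincoef}: along each line the curvilinear condition and the collinearity condition cut out complementary linear spaces meeting transversally, and the punctual factor at $Q$ (resp. at the point of $Z_i$ on $L$) is handled identically. Properness of the intersection and the absence of excess contributions from nongeneric limits should follow from the dimension count together with the general position of $P,L,P_1,\dots,P_r,Q,Q'$; verifying this, and confirming that the forced matchings exhaust every scheme in the intersection, is the technical core of the argument.
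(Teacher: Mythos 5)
Your proposal is correct and follows essentially the same route as the paper: pair both sides with the class $\tau$ of Lemma \ref{lem:maincoef} to isolate the coefficient as the intersection number $[\Theta^{P,\mb{m}}_{L,i}]\cdot\tau$ (resp. $[\Phi^{O,\mb{m}}_{Q,q}]\cdot\tau$), enumerate the schemes in the intersection to obtain $\ell\cdot c(\hat{\mb{m}},\hat{\lambda})$ (resp. $c(\mb{m},\lambda)$), and verify transversality factor by factor on product charts via Lemma \ref{lem:techlemma}. You supply more detail on the enumeration than the paper, which simply asserts the count, but the argument is the same.
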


\begin{proof}
By Lemma \ref{lem:maincoef}, the coefficient $c_j^\lambda$ in Corollary \ref{cor:equiv1} is the intersection number $\left[\Theta^{P,\mb{m}}_{L_i}\right] \cdot \tau$ which is $\ell \cdot c(\hat{\mb{m}},\hat{\lambda})$ many schemes. Similarly, the coefficient $c_j^\lambda$ in Corollary \ref{cor:equiv2} is the intersection number $\left[\Phi^{O,\mb{m}}_{Q,q}\right] \cdot \tau$ which is $c(\mb{m},\lambda)$ many schemes. 

It remains to check that these intersections are transverse. This can be done locally since around any point of intersection one has charts $(\mathbb{A}^2)^{[\lambda_1+1]}\times \cdots \times (\mathbb{A}^2)^{[\lambda_r+1]} \times \mathbb{A}^2 \times \cdots \times \mathbb{A}^2$. Lemma \ref{lem:techlemma} shows transversality on the first $r$ factors. In the factors of $\mathbb{A}^2$, one can write charts for both loci and compute the tangent spaces exactly as is done in \cite{ME}. 
\end{proof}

\begin{cor} \label{cor:seconddegcoef}
The coefficient $c^0_\mb{m}$ in Corollary \ref{cor:equiv2} is 1.
\end{cor}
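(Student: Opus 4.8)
The plan is to obtain this as an immediate specialization of Proposition \ref{prop:coef}. First I would pin down exactly which term of the sum in Corollary \ref{cor:equiv2} the coefficient $c^0_\mb{m}$ names. The notation indicates the index $j = 0$ together with $\lambda = \mb{m}$. This is consistent with the structure of the degeneration: by definition $\Sigma_0$ is the set of partitions obtained from $\mb{m}$ by subtracting one zero times, which is the singleton $\set{\mb{m}}$. Hence the term $\phi^\mb{m}_{q}$ occurs in the sum $\sum_{j=0}^r \sum_{\lambda \in \Sigma_j} c_j^\lambda \left[\phi^\lambda_{q+j}\right]$ precisely once, with coefficient $c^0_\mb{m} = c_0^\mb{m}$, and geometrically it is the ``main'' component in which the length $q$ subscheme at $Q$ collides with the origin without absorbing any of the points lying on the moving lines.

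Next I would invoke the explicit formula from Proposition \ref{prop:coef} in the case of Corollary \ref{cor:equiv2}, which asserts $c_j^\lambda = c(\mb{m},\lambda)$. Specializing to $j = 0$ and $\lambda = \mb{m}$ gives $c^0_\mb{m} = c(\mb{m},\mb{m})$, so the entire claim reduces to the combinatorial evaluation of $c(\mb{m},\mb{m})$.

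Finally I would compute $c(\mb{m},\mb{m})$ directly from its definition. Recall $c(\mb{m},\lambda)$ counts the ways to recover $\mb{m}$ from $\lambda$ by adding one to $j$ entries of $\lambda$ and then reordering; here $j = 0$, so one adds a box to no entries at all. There is exactly one such empty choice, and it returns $\mb{m}$ verbatim, requiring no reordering. Therefore $c(\mb{m},\mb{m}) = 1$, and consequently $c^0_\mb{m} = 1$.

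I do not anticipate any real obstacle, since the statement is a pure specialization of an already-established proposition together with the triviality that there is a unique way to add zero boxes. The only point demanding a moment of care is the bookkeeping in the first paragraph, namely confirming that the indexing convention $c^0_\mb{m}$ corresponds to $(j,\lambda) = (0,\mb{m})$ and that $\Sigma_0 = \set{\mb{m}}$, so that this term is genuinely present in the sum of Corollary \ref{cor:equiv2}; everything else is immediate.
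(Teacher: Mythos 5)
Your proposal is correct and follows exactly the paper's own argument: the paper also derives this as the specialization $\lambda = \mb{m}$, $j = 0$ of Proposition \ref{prop:coef} and observes that $c(\mb{m},\mb{m}) = 1$. Your additional bookkeeping about $\Sigma_0 = \set{\mb{m}}$ is a harmless elaboration of the same one-line proof.
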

\begin{proof}
This is the case in Proposition \ref{prop:coef} of $\lambda = \mb{m}$, and $c(\mb{m},\mb{m})$ is 1. 
\end{proof}

\begin{lem} \label{lem:firstdegcoef}
The coefficient $a$ in Corollary \ref{cor:equiv1} is $\mb{m}_i + 1$. 
\end{lem}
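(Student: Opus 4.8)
The plan is to read off $a$ as the multiplicity of the reduced irreducible component $\Theta^{O,\mb{m}}_{L,i}$ in the special fiber cycle $[\hat F_0]$, by working in a local chart of $\hat F$ at a \emph{general} closed point $Z_0 \in \Theta^{O,\mb{m}}_{L,i}$. Since $\hat F$ is the scheme-theoretic closure of $F$ over the smooth base $\mathbb C$, it is flat over $\mathbb C$, so this multiplicity is exactly the coefficient $a$ in Corollary \ref{cor:equiv1}. I choose $Z_0$ general, so that it is a reduced union of $N+r$ distinct points whose $i$-part $Z_{0,i}$ consists of $\mb{m}_i+1$ distinct points of $L$ lying away from $O$; in particular $\mathbb{P}^{2[N+r]}$ is smooth at $Z_0$.

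Because $Z_0$ is reduced with distinct support, \'etale-locally the Hilbert scheme factors as a product over the $r$ collinear parts, and the family $\hat F$ factors accordingly over the base parameter $t$ as the product of the family governing the $i$-part with the families governing the parts $Z_k$ for $k \neq i$. The parts with $k \neq i$ carry no incidence condition with $L$: each is just the family of $\mb{m}_k+1$ points collinear with $P_t$, which is smooth over $\mathbb C$ with reduced, irreducible special fiber (the points collinear with $O$), and hence contributes multiplicity $1$. Thus the whole of $a$ is computed from the $i$-part alone, and I am reduced to a single collinear family.

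For the $i$-part, write the line through $P_t=(0,t)$ of slope $\mu$ as $y = t + \mu x$, let $x_1,\dots,x_{\mb{m}_i+1}$ be the abscissae of its $\mb{m}_i+1$ points, and take $\mu$ together with the elementary symmetric functions $e_1,\dots,e_{\mb{m}_i+1}$ of the $x_\ell$ as coordinates on this factor. Inside this smooth ambient family (collinear with $P_t$, no $L$-condition), the fiber $F_t$ is cut out by the single equation expressing that some point meets $L=\{y=0\}$, namely
\[
G \;=\; \prod_{\ell=1}^{\mb{m}_i+1}\bigl(t + \mu x_\ell\bigr)\;=\;\sum_{k=0}^{\mb{m}_i+1} t^{\,\mb{m}_i+1-k}\,\mu^{k}\,e_k \;=\;0,
\]
which is symmetric in the $x_\ell$ and hence a genuine function on the chart. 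Setting $t=0$ gives $G|_{t=0}=\mu^{\mb{m}_i+1}e_{\mb{m}_i+1}$. At $Z_0$ the coordinate $e_{\mb{m}_i+1}=\prod_\ell x_\ell$ is the product of the nonzero abscissae of the points of $Z_{0,i}$, hence a local unit, while $\{\mu=0\}$ is exactly the locus where the spanning line is $L$, i.e. the component $\Theta^{O,\mb{m}}_{L,i}$. Therefore $G|_{t=0}$ vanishes to order precisely $\mb{m}_i+1$ along $\Theta^{O,\mb{m}}_{L,i}$, which gives $a=\mb{m}_i+1$.

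The main point requiring care is the decoupling and flatness: that near $Z_0$ the family $\hat F$ is cut out by $G$ inside a smooth ambient family over $\mathbb C$ — equivalently that $V(G)$ has no component supported over $t=0$, which is immediate since $G$ is not divisible by $t$ — and that the factors with $k\neq i$ genuinely contribute no extra multiplicity. Once this local product structure is established, the identity $G|_{t=0}=\mu^{\mb{m}_i+1}e_{\mb{m}_i+1}$ makes the value of $a$ transparent; as a consistency check it also exhibits, along $\{e_{\mb{m}_i+1}=0\}$, the origin-supported components $\phi^\lambda_j$ with multiplicity one, in agreement with Proposition \ref{prop:coef}.
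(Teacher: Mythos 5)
Your proof is correct, but it takes a genuinely different route from the paper's. The paper determines $a$ enumeratively: it pairs both sides of Corollary \ref{cor:equiv1} with a dual test class $\tau$ (schemes with nonreduced pieces of the right lengths at fixed general points for $j \neq i$, incident to $\mb{m}_i+1$ general lines), checks that $\tau$ meets $\Theta^{O,\mb{m}}_{L,i}$ in a single point, misses every $\phi^\lambda_j$, and meets the general fiber $\Theta^{P,\mb{m}}_{L,i}$ in $\mb{m}_i+1$ points (one for each choice of which general line's intersection with $L$ the moving line through $P$ passes through). This is uniform with how the paper computes all the other coefficients (Lemma \ref{lem:maincoef}, Proposition \ref{prop:coef}), at the price of transversality checks for the test intersections. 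You instead compute the multiplicity of the component $\Theta^{O,\mb{m}}_{L,i}$ in the special fiber cycle directly, in a local chart at a general point of that component: since the $N+r$ points there are distinct, the family decouples, only the $i$-th collinear factor carries the $L$-incidence condition, and the identity $\prod_\ell (t+\mu x_\ell) = \sum_k t^{\mb{m}_i+1-k}\mu^k e_k$ exhibits $\hat F$ as the hypersurface $V(G)$ (flat over $\mathbb{C}_t$ since $t \nmid G$) with $G|_{t=0} = \mu^{\mb{m}_i+1}e_{\mb{m}_i+1}$ and $e_{\mb{m}_i+1}$ a unit along the component $\{\mu=0\}$. This is clean, self-contained, and arguably more transparent about where the multiplicity comes from; it also avoids any reliance on the other components' geometry. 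One inessential caveat: your closing ``consistency check'' overreaches --- the divisor $\{e_{\mb{m}_i+1}=0\}$ in your chart only sees the locus where a single point of the $i$-th part reaches the origin, not the generic points of the loci $\phi^\lambda_j$ (whose multiplicities per Proposition \ref{prop:coef} are $\ell \cdot c(\hat{\mb{m}},\hat{\lambda})$ and can exceed one, e.g.\ the coefficient $3$ on the edge from $(2,1,1)$ to $(1,1,1)$ in Figure \ref{fig:DAG}), so that sentence should be dropped or weakened; it does not affect the proof of the lemma.
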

\begin{proof}
We prove this by intersecting both sides of the equivalence with a class $\tau$ which does not meet any of the other classes on the right hand side but the class $\left[\Theta^{\mb{m}}_{L,i}\right]$. 

Let $\tau$ be the class of the locus of schemes which contain nonreduced subschemes of lengths $\mb{m}_j$ each supported at a fixed general point for $j \neq i$ and which are incident to $\mb{m}_i + 1$ general lines. The intersection of locus with $\Theta^{\mb{m}}_{L,i}$ is one. The intersection of this locus with $\Theta^{P,\mb{m}}_{L,i}$ is $\mb{m}_i + 1$ according to the choice of a point of intersection of $L$ with one of the $\mb{m}_i+1$ fixed general lines.

This determines the coefficient $a$ to be $\mb{m}_i + 1$.
\end{proof}

\subsection{The MS Degeneration}

We finish this section with a final degeneration which is due to Mallavibarrena and Sols \cite{MS}.

Let $L$ be a line in $\mathbb{P}^2$, $Q \in \mathbb{P}^2$ a point, and let $l$ and $q$ be nonnegative integers. Define the subset $H^{L,l}_{Q,q}$ to be the locus of schemes $Z \in \mathbb{P}^{2[l+q]}$ such that
\begin{itemize}
\item the length $\ell(Z \cap L) \geq l$; and
\item the length $\ell(Z)_Q \geq q$.
\end{itemize}

\begin{figure}[H]
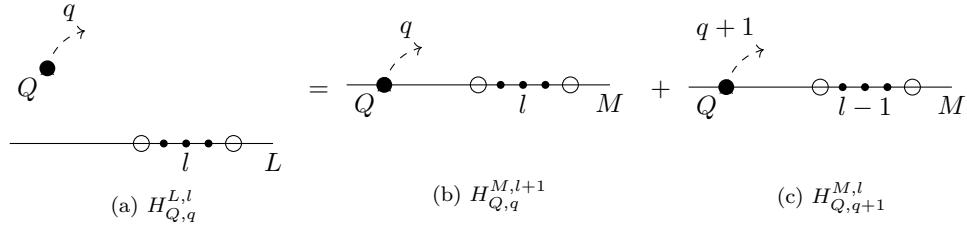

    \centering
    
    \raisebox{-.4\height}{\subfloat[$H^{L,l}_{Q,q}$]{\input{figures/pieri/MSdeg/MSdeg1}}}\ $=$
    \raisebox{-.4\height}{\subfloat[$H^{M,l+1}_{Q,q}$]{\input{figures/pieri/MSdeg/MSdeg2}}}\ $+$
    \raisebox{-.4\height}{\subfloat[$H^{M,l}_{Q,q+1}$]{\input{figures/pieri/MSdeg/MSdeg3}}}

    \caption{The degeneration of Mallavibarrena and Sols.}
    \label{fig:MSdeg}
\end{figure}

\begin{prop}\label{prop:MSdeg}
Let $Q$ be a point of $\mathbb{P}^2$, $L$ a line not containing $Q$, and $M$ a line containing $Q$. Let $l$ and $q$ be nonnegative integers. There is an equivalence of classes on $\mathbb{P}^{2[l+q]}$
$$\left[H^{L,l}_{Q,q}\right] \sim \left[H^{M,l}_{Q,q+1}\right] + \left[H^{M,l+1}_{Q,q}\right].$$
\end{prop}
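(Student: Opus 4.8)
The plan is to prove the relation by a degeneration that moves the general line $L$ onto the special line $M$ through $Q$, in the same style as Proposition~\ref{prop:firstdeg} and Proposition~\ref{prop:maindeg}. Fix a general point $R$ of $M$ with $R \neq Q$ and let $\set{L_t}$ be the pencil of lines through $R$, arranged so that $L_0 = M$ and $L_t \not\ni Q$ for $t \neq 0$ (the unique line of the pencil through $Q$ is $M$ itself, so this is automatic). Let $F$ be the family in $\mathbb{P}^{2[l+q]} \times \mathbb{C}^*$ whose fiber over $t \neq 0$ is $H^{L_t,l}_{Q,q}$. Each fiber is projectively equivalent to $H^{L,l}_{Q,q}$ and so has the same class, and the closure $\hat F$ in $\mathbb{P}^{2[l+q]} \times \mathbb{C}$ is a flat family over the smooth curve $\mathbb{C}$. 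Consequently the special fiber is rationally equivalent to the general fiber, $\left[H^{L,l}_{Q,q}\right] \sim \left[\hat F_0\right]$, and the whole proposition reduces to identifying the support of $\hat F_0$ and computing the multiplicities of its components.

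To identify the support I would run the limit analysis of the proofs in Section~\ref{sec:degenerations}. A general point $Z_0$ of $\hat F_0$ is the limit of schemes $Z_t$ consisting of $l$ points on $L_t$ (necessarily away from $Q$, since $Q \notin L_t$) together with a length $q$ punctual scheme at $Q$. As $t \to 0$ the $l$ points specialize onto $M$, and there are exactly two generic outcomes. If all $l$ points stay away from $Q$, then $\ell(Z_0)_Q \geq q$, while $Q \in M$ forces the punctual scheme at $Q$ to meet $M$ in length one by Lemma~\ref{lem:multone}, giving $\ell(Z_0 \cap M) \geq l+1$ and hence $Z_0 \in H^{M,l+1}_{Q,q}$. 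If instead exactly one point collides with $Q$, then $\ell(Z_0)_Q \geq q+1$ and $\ell(Z_0 \cap M) \geq (l-1)+1 = l$, so $Z_0 \in H^{M,l}_{Q,q+1}$. A dimension count identical to those in Section~\ref{sec:degenerations} shows each of these loci has dimension equal to that of the general fiber, so as a cycle $\left[\hat F_0\right]$ is supported exactly on $H^{M,l+1}_{Q,q} \cup H^{M,l}_{Q,q+1}$; lower-dimensional limits do not contribute to the class.

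The remaining task, which I expect to be the main obstacle, is to show that both components appear with multiplicity one. I would compute each multiplicity by pairing $\left[\hat F_0\right]$ with an auxiliary class of complementary dimension that meets only the component in question, exactly as in the proof of Lemma~\ref{lem:maincoef}: one fixes a general incidence condition on $M$ together with a general punctual condition at $Q$ of the appropriate length, so that the pairing isolates the multiplicity as a single local intersection number. Using charts around the point of intersection as in Lemma~\ref{lem:maincoef}, the required transversality is reduced to the computation already carried out in Lemma~\ref{lem:techlemma}. The delicate case is the collision component $H^{M,l}_{Q,q+1}$: one must verify that a general length $q+1$ scheme at $Q$ is produced along a single reduced branch of the family --- that feeding one point into a general length $q$ scheme along the direction of $M$ yields the general length $q+1$ scheme in a unique, unramified way, rather than with multiplicity equal to the number of points that could have collided. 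This is a local tangent-space computation near $Q$, and it is where the real work lies; once it confirms multiplicity one for each component, the stated equivalence $\left[H^{L,l}_{Q,q}\right] \sim \left[H^{M,l}_{Q,q+1}\right] + \left[H^{M,l+1}_{Q,q}\right]$ follows.
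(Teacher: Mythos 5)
The paper does not actually supply a proof of Proposition \ref{prop:MSdeg}; it defers entirely to Mallavibarrena and Sols \cite{MS}, so there is no in-house argument to compare against. Judged on its own terms, your degeneration-of-the-line strategy is reasonable and structurally parallel to Propositions \ref{prop:firstdeg} and \ref{prop:maindeg}. Two remarks on the support identification. First, the cleanest route is not the case analysis you give but semicontinuity of both defining conditions: $\ell(Z\cap L_t)\geq l$ and $\ell(Z)_Q\geq q$ are closed conditions on the total space, so every limit $Z_0$ satisfies $\ell(Z_0\cap M)\geq l$ and $\ell(Z_0)_Q\geq q$; if $\ell(Z_0)_Q\geq q+1$ then $Z_0\in H^{M,l}_{Q,q+1}$ outright, and if $\ell(Z_0)_Q=q$ then the length-$l$ residual is a subscheme of $M$ disjoint from $Q$, so the nonempty punctual piece at $Q\in M$ forces $\ell(Z_0\cap M)\geq l+1$. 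Second, your claim that ``a dimension count identical to those in Section \ref{sec:degenerations}'' rules out deeper collisions is not correct as stated: the locus of limits in which $k\geq 2$ of the moving points collide with $Q$ (general punctual scheme of length $q+k$ at $Q$ together with $l-k$ points of $M$) also has dimension $(q+k-1)+(l-k)=l+q-1$, equal to the fiber dimension, so dimension alone does not exclude it. What excludes it is containment: such limits already lie in $H^{M,l}_{Q,q+1}$, hence in a proper closed subset of that irreducible component, and therefore do not constitute new components. You need one of these two repairs for the support statement to be airtight.

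The genuine gap is the multiplicity computation, which is the entire content of the proposition beyond the qualitative statement $\bigl[H^{L,l}_{Q,q}\bigr]\sim a\bigl[H^{M,l+1}_{Q,q}\bigr]+b\bigl[H^{M,l}_{Q,q+1}\bigr]$ with $a,b\geq 0$. You describe a plan --- pair with auxiliary classes in the style of Lemma \ref{lem:maincoef} and reduce to Lemma \ref{lem:techlemma} --- but you neither construct the two pairing classes $\tau$, nor verify that each meets exactly one component in exactly one point, nor compute the corresponding intersection numbers $[F_t]\cdot\tau$ with the \emph{general} fiber (which is what actually produces the coefficients $a=b=1$), nor carry out the local tangent-space computation at $Q$ that you yourself flag as ``where the real work lies.'' That last point is exactly the delicate one: a priori the collision component could appear with multiplicity greater than one (for instance, reflecting the $l$ choices of which moving point collides with $Q$, a phenomenon that genuinely produces multiplicities $c(\mb{m},\lambda)>1$ in Proposition \ref{prop:coef}), and nothing in the proposal rules this out. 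As written, the argument establishes the shape of the special fiber but not the equality of classes claimed in the proposition.
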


For a proof, see Mallavibarrena and Sols \cite{MS}. A schematic picture can be found in Figure \ref{fig:MSdeg}. We obtain the obvious corollary.

\begin{cor}\label{cor:MS1}
There is an equivalence of classes
$$\left[H^{M,1}_{Q,q}\right] = \sum_{i=1}^q (-1)^{i+1} \left[H^{L,i}_{Q,q-i}\right] + (-1)^{q-1}\left[H^{M,q}_{Q,1}\right].$$
\end{cor}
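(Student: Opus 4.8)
The plan is to derive the identity purely formally by iterating Proposition \ref{prop:MSdeg}, peeling off one $H^{L}$-term at each stage. First I would rewrite the proposition in a form adapted to lowering the multiplicity at $Q$: replacing the point-multiplicity $q$ by $q-1$ and solving for the $H^{M}$-term with the smaller line-multiplicity gives, for $q \geq 1$,
$$\left[H^{M,l}_{Q,q}\right] = \left[H^{L,l}_{Q,q-1}\right] - \left[H^{M,l+1}_{Q,q-1}\right].$$
This is the engine of the argument: it trades an $H^{M}$-class for a single $H^{L}$-class together with one new $H^{M}$-class whose line-multiplicity has increased by one and whose point-multiplicity has decreased by one. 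Since $Q \notin L$ while $Q \in M$, the hypotheses of Proposition \ref{prop:MSdeg} are met at every stage, so each such rewrite is legitimate.

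Next I would apply this relation repeatedly, starting from $\left[H^{M,1}_{Q,q}\right]$. One application yields $\left[H^{L,1}_{Q,q-1}\right] - \left[H^{M,2}_{Q,q-1}\right]$; applying it again to the leftover $H^{M}$-term, and continuing, produces at the $k$-th stage the truncated equivalence
$$\left[H^{M,1}_{Q,q}\right] = \sum_{i=1}^{k} (-1)^{i+1}\left[H^{L,i}_{Q,q-i}\right] + (-1)^{k}\left[H^{M,k+1}_{Q,q-k}\right],$$
which I would prove by a short induction on $k$, the relation above being exactly the inductive step (the sign on the remaining $H^{M}$-term alternates, and its two indices move in opposite directions by one unit each). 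Running the iteration until the point-multiplicity of the leftover term drops to $1$, i.e.\ taking $k = q-1$, isolates the remainder $(-1)^{q-1}\left[H^{M,q}_{Q,1}\right]$ and collects the peeled-off classes into the alternating sum $\sum_{i}(-1)^{i+1}\left[H^{L,i}_{Q,q-i}\right]$; a single further application of Proposition \ref{prop:MSdeg} to this terminal remainder then puts the tail of the formula into the shape displayed in the statement.

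The substance here is entirely bookkeeping rather than geometry, so the main obstacle is precision rather than depth: I would need to track the alternating signs cleanly across the whole iteration and confirm that every intermediate class is well defined, i.e.\ that the line- and point-multiplicities stay in the admissible range at each step. The genuinely delicate point is the \emph{terminal} step, where one must decide whether the last $H^{M}$-term is to be reduced further; this is precisely where an off-by-one in the upper limit of the summation index is easy to introduce. To nail down the endpoint unambiguously I would check the cases $q=1$ and $q=2$ by hand against the closed form before asserting the general statement, and then record the whole chain as an equivalence of cycle classes (each arrow being an instance of Proposition \ref{prop:MSdeg}), which upgrades to the stated equality of classes.
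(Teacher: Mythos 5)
Your strategy --- solve Proposition \ref{prop:MSdeg} for the $H^{M}$-term to get $\left[H^{M,l}_{Q,q}\right]=\left[H^{L,l}_{Q,q-1}\right]-\left[H^{M,l+1}_{Q,q-1}\right]$ and iterate --- is exactly the derivation the paper intends (it offers nothing beyond ``the obvious corollary''), and your truncated identity
$$\left[H^{M,1}_{Q,q}\right] = \sum_{i=1}^{k} (-1)^{i+1}\left[H^{L,i}_{Q,q-i}\right] + (-1)^{k}\left[H^{M,k+1}_{Q,q-k}\right]$$
is correct for every $k$ by the induction you describe. The gap is the terminal step. At $k=q-1$ you have the sum up to $i=q-1$ plus the remainder $(-1)^{q-1}\left[H^{M,q}_{Q,1}\right]$, and you then assert that ``a single further application'' of the proposition puts the tail into the displayed shape. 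It cannot: one more application \emph{replaces} $\left[H^{M,q}_{Q,1}\right]$ by $\left[H^{L,q}_{Q,0}\right]-\left[H^{M,q+1}_{Q,0}\right]$, which is just the $k=q$ instance of your truncated identity, whereas the printed corollary keeps $(-1)^{q-1}\left[H^{M,q}_{Q,1}\right]$ \emph{and} adds the $i=q$ summand. No stopping point of the iteration yields both at once.

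Had you carried out the sanity check you yourself proposed, you would have found that the discrepancy lies in the printed statement rather than in your induction: at $q=1$ the corollary reads $\left[H^{M,1}_{Q,1}\right]=\left[H^{L,1}_{Q,0}\right]+\left[H^{M,1}_{Q,1}\right]$, i.e.\ $\left[H^{L,1}_{Q,0}\right]=0$, which is false; more structurally, the $i=q$ term imposes no condition at $Q$ and so has codimension one less than every other term in the identity, so it cannot appear in this equivalence of cycles. What your iteration actually proves is the corollary with the summation truncated at $i=q-1$ (equivalently, with upper limit $q$ but tail $(-1)^{q}\left[H^{M,q+1}_{Q,0}\right]$), and that is the form that should be fed into the algorithm. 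So: right approach, correct recursion and induction, but the endpoint you assert is unobtainable, and the one genuinely load-bearing step --- actually checking $q=1,2$ against the closed form --- is the step you deferred.
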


A slight modification of the degeneration in Proposition \ref{prop:MSdeg} will allow us to resolve the class of loci appearing naturally in the intersections $H \cdot \sigma$. To that end, let $Q$ be a point of $\mathbb{P}^2$, $L$ be a line containing a fixed point $P$, and $l,q$ be nonnegative integers. Define the locus $G^{P,L,l}_{Q,q}$ to be the schemes $Z \in \mathbb{P}^{2[l + q + 1]}$ such that
\begin{itemize}
\item $P \in Z$;
\item the length $\ell(Z \cap L) \geq l$; and
\item the length $\ell(Z)_Q \geq q$.
\end{itemize}
Notice that $G^{P,L,l}_{Q,q}$ differs from $H^{L,l}_{Q,q}$ since its members contain the additional fixed point $P \in L$. By fixing a point on $L$ in Proposition \ref{prop:MSdeg}, we obtain the following corollary.

\begin{cor}\label{cor:MS2}
Let $P$ and $Q$ be distinct fixed points in $\mathbb{P}^2$, let $L$ be the line they span, and let $R$ be a fixed point not contained in $L$. There is an equivalence of classes
$$\left[G_{Q,1}^{P,L,l+2}\right] = \sum_{i=0}^{l} (-1)^{i} \left[G_{R,i}^{P,L,l+1-i}\right].$$
\end{cor}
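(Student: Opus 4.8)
The plan is to prove the corollary in two stages: first upgrade Proposition \ref{prop:MSdeg} to a three-term degeneration for the $G$ loci by fixing the point $P$, and then iterate that relation exactly as Corollary \ref{cor:MS1} is obtained from Proposition \ref{prop:MSdeg}. The only genuinely new geometric input is the modified degeneration itself; everything afterward is a telescoping bookkeeping argument.

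First I would set up the modified degeneration. Fix the point $P$ and let a general line through $P$ (missing the multiplicity point) rotate about $P$ until it coincides with the line joining $P$ to that point. Because $P$ is a fixed reduced point of every scheme in the family and lies on every line of the pencil, it behaves as an inert spectator: the flat-limit analysis of Proposition \ref{prop:MSdeg} applies verbatim, and the condition $P \in Z$ is closed, hence preserved in the special fiber. Thus no new components are introduced and the two limit components again occur with multiplicity one, yielding a relation of the shape $[G^{P,L,l}_{Q,q}] = [G^{P,M,l}_{Q,q+1}] + [G^{P,M,l+1}_{Q,q}]$, where $L$ avoids the multiplicity point and $M = \langle PQ\rangle$ passes through it. Here I would invoke Lemma \ref{lem:multone} for the length bookkeeping when the multiplicity point lies on $M$: a general fat point contributes length one to a general line through it, so the total length stays constant across the relation.

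Next I would solve this relation for the ``on-line'' class and telescope, lowering the line-incidence index one unit at a time while raising the multiplicity index, in direct parallel with the proof of Corollary \ref{cor:MS1}. At each step the ``off-line'' term produced is, as a cycle class, independent of the particular general line through $P$ and general off-line multiplicity point chosen (one moves between such configurations by automorphisms of $\mathbb{P}^2$ fixing $P$), so it may be rewritten with multiplicity point $R$; these are precisely the terms $G^{P,L,l+1-i}_{R,i}$ on the right-hand side, and the alternating signs arise from carrying each successive on-line term across the equation.

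The step I expect to be the main obstacle --- and the reason the final formula has no leftover boundary term, unlike Corollary \ref{cor:MS1} --- is the termination of the recursion. Here the simultaneous presence of $P$ and the multiplicity point on $M$ is decisive: $P$ contributes length one to $Z \cap M$, and by Lemma \ref{lem:multone} the subscheme at the multiplicity point contributes a further length one, so the line-incidence condition becomes automatic once its index drops low enough. At that value the ``on-line'' class imposes no genuine line condition beyond what $P$ already forces, so it coincides with an off-line class and is absorbed as the final ($i = l$) term of the sum rather than surviving as a boundary term. Carefully justifying this identification, and checking that the length bookkeeping remains consistent through every step of the telescoping, is where the real care is required.
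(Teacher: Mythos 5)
Your approach is exactly the paper's: the text gives no proof beyond the remark that the corollary follows ``by fixing a point on $L$ in Proposition \ref{prop:MSdeg},'' i.e., one reruns the Mallavibarrena--Sols degeneration with the marked point $P$ carried along to get the three-term relation for the $G$-loci and then telescopes it precisely as Corollary \ref{cor:MS1} is obtained. Your additional care about the termination of the recursion (why no boundary term survives, unlike in Corollary \ref{cor:MS1}) and about relocating the off-line multiplicity point to $R$ supplies details the paper omits, but it is the same argument.
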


\section{The Algorithm}

In this section, we describe how to compute the class of the intersection of the divisor $\sigma_{(0,1,(N-1))}$ with any other class $\sigma_{\alpha}$ for a triple of partitions $\alpha = (\mb{a},\mb{b},\mb{c})$ in the Hilbert scheme $\mathbb{P}^{2[N]}$. We begin with some immediate observations and reductions.

First, we fix a general representative $U_\alpha$ for $\sigma_{\alpha}$ and $H$ for $\sigma_{(0,1,(N-1))}$. The intersection of these two representatives is generically transverse as long as the lines and points defining $U_\alpha$ and the line defining $H$ are chosen generally with respect to each other, as is the case. The irreducible components of the intersection are thus easy to identify: the point moving on the line defining $H$ must satisfy one of the three different types of conditions defining $U_\alpha$ which we call type A, B, and C, respectively. Type A intersections contain as a fixed point the intersection point of the line defining $H$ and one of the lines through a fixed point defining $U_\alpha$. Type B intersections contain as a fixed point the intersection point of the line defining $H$ and one of the fixed lines defining $U_\alpha$ which does not pass through a fixed point defining $U_\alpha$. Type C intersections occur when one of the points on a moving line through $Q$ defining $U_\alpha$ resides on the line defining $H$. See Figure \ref{fig:threetypes} for schematic diagrams of these intersections.

\begin{figure}[H]
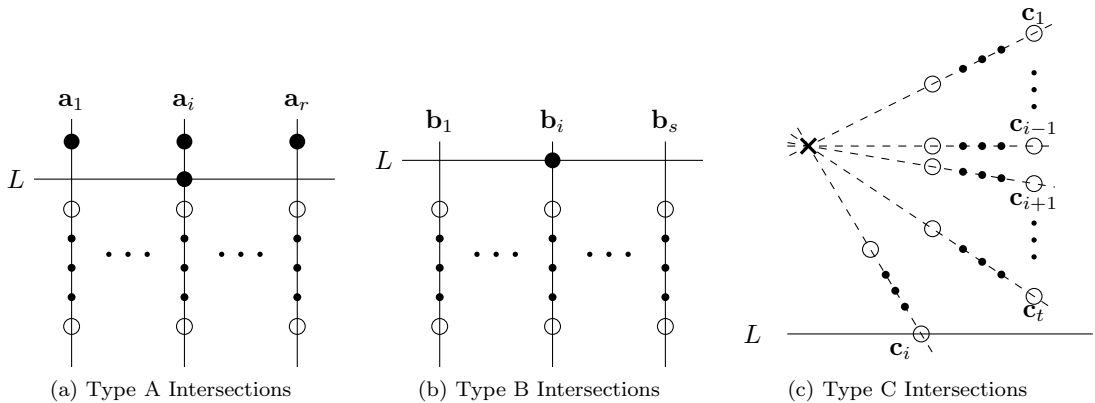

    \centering
    
    \subfloat[Type A Intersections\label{fig:typeA}]{\input{figures/pieri/types/type1}}\ \,
    \subfloat[Type B Intersections\label{fig:typeB}]{\input{figures/pieri/types/type2}}\ \,
    \subfloat[Type C Intersections\label{fig:typeC}]{\input{figures/pieri/types/type3}}\ \,
    
    \caption{The three different types of intersections.}
    \label{fig:threetypes}
\end{figure}

The class $\sigma_{(\mb{a},\mb{b},\mb{c})} \cdot \sigma_{(0,1,(N-1))}$ is the sum of the classes of each component of the intersection of $H$ with $U_\alpha$. It remains to compute these classes. We will dispatch the easiest case first. 

\subsection{Type B}

The simplest case is when the point moving on the line defining $H$ meets any of the fixed lines defining $U_\alpha$ coming from the partition $\mb{b}$ as shown in Figure \ref{fig:typeB}. The classes of these components are already elements of the MS basis, and there is very little work to be done. 

If $\mb{b} = (\mb{b}_1,\ldots,\mb{b}_s)$, then the resulting class describing the intersection along these types is given by the sum
$$\sum_{i = 1}^s \sigma_{(\mb{a}^{i},\widehat{\mb{b}^{i}},\mb{c})}$$
where $\widehat{\mb{b}^{i}}$ is $\mb{b}$ with $\mb{b_i}$ omitted and $\mb{a}^{i}$ is $\mb{a}$ with $\mb{b}_i$ inserted. Notice that if an entry of $\mb{b}$ is repeated then there are repeated summands.

\subsection{Type A}

This occurs when the point moving on the line defining $H$ meets any of the lines in $U_\alpha$ through a fixed point of $U_\alpha$ as defined by $\mb{a}$ as shown in Figure \ref{fig:typeA}. Because the resulting locus contains schemes with two fixed points on the same line, possibly with or without remaining moving points on that line, we must resolve the class of this locus into elements of the MS basis. 

\begin{figure}[H]
    \centering
    \input{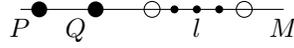}
    \caption{A locus of type A, or $G^{P,L,l+2}_{Q,1}$.}
    \label{fig:typeA2}
\end{figure}

For this we turn to the degeneration of Mallavibarrena and Sols given in Proposition \ref{prop:MSdeg}. Specifically, Corollary \ref{cor:MS2} allows us to write the class of the locus $G^{P,L,l+2}_{Q,1}$ (Figure \ref{fig:typeA2}) of the intersection as an alternating sum of the classes of loci consisting of a fixed point $P$ with some number of points on a line through $P$ and a nonreduced subscheme supported at a point away the line. The class of these loci can then be written in the MS basis by repeatedly applying Corollary \ref{cor:MS1}.

\subsection{Type C}

Let $\mb{c}^\prime$ be the partition given by $\mb{c} - 1 := (\mb{c}_1 - 1, \mb{c}_2 -1, \ldots, \mb{c}_t -1)$. This shift is necessary to align with our notation from Section \ref{sec:degenerations}. Any locus of Type C as shown in Figure \ref{fig:typeC} is consequently the locus $\Theta^{Q,\mb{c}^\prime}_{L,i}$ for general line $L$ defining the locus $H$ and general point $Q$ defining $U_\alpha$.

The general steps are as follows.
\begin{enumerate}
\item We apply the degeneration from Proposition \ref{prop:firstdeg} and Corollary \ref{cor:equiv1} to write the class $[\Theta^{Q,\mb{c}^\prime}_{L,i}]$ as a sum of classes of loci of type $\phi^\lambda_j$. We determine the coefficients $c^\lambda_j$ in Corollary \ref{cor:equiv1} by Proposition \ref{prop:coef}, see Section \ref{sec:coefs}.
\item We then resolve the classes $[\phi^\lambda_j]$ in order of increasing $j$. To each locus we apply the degeneration in Proposition \ref{prop:maindeg} and Corollary \ref{cor:equiv2}. In doing so, we replace $[\phi^\lambda_j]$ with a class of $[\Phi^\lambda_{Q,j}]$ at the cost of adding additional classes of loci $\phi^{\lambda^\prime}_k$ for $k > j$. We again determine the coefficients $c_k^{\lambda^\prime}$ in Corollary \ref{cor:equiv2} by Proposition \ref{prop:coef}.
\item Repeat step 2 increasing the length of the subscheme common to all schemes in the locus at each step. Since this is bounded above by the total number of points $C$, eventually this terminates in the class $[\phi^{(0,0,\ldots,0)}_{C}] = [\Phi^{(0,0,\ldots,0)}_{Q,C}]$.
\item We replace each class $[\Phi^\lambda_{Q,j}]$ with a sum of classes in the MS basis using the degeneration of Mallavibarrena and Sols in Proposition \ref{prop:MSdeg} and Corollary \ref{cor:MS1}.
\end{enumerate} 

\subsection{Ending Remarks} \label{sec:remarks}

Finally, we have a conjecture for a closed form for the intersection of $H$ with an MS basis class consisting of only moving lines. 
\begin{conj}\label{conj:pieri}
The intersection of $H$ with a general MS locus $(0,0,\mb{m})$ along a moving line with $\mb{m}_i$ many points on it is a weighted sum over classes of the loci $\Theta^\mb{m}_{L,i}$ and $\phi^\lambda_j$ indexed by only $\lambda$'s obtained by subtracting an integer $0 \leq x \leq \mb{m}_i$ from the entry $\mb{m}_i$ in $\mb{m}$.
\end{conj}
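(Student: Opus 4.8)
The plan is to reduce Conjecture~\ref{conj:pieri} to a single explicit combinatorial identity about the numbers $c(\mb{m},\lambda)$ and to prove that identity by a generating--function computation. First I would make the output of the Type~C procedure of Section~\ref{sec:degenerations} explicit. Running it on $[\Theta^{Q,\mb{m}}_{L,i}]$, Corollary~\ref{cor:equiv1} produces $(\mb{m}_i+1)[\Theta^{O,\mb{m}}_{L,i}]$ together with $\sum_{\nu}C_1(\nu)[\phi^{\nu}]$, where by Proposition~\ref{prop:coef} the coefficient is $C_1(\nu)=\ell(\nu)\,c(\hat{\mb{m}},\hat{\nu})$ and $\nu$ ranges over the partitions obtained from $\mb{m}$ by reducing $\mb{m}_i$ and some other distinct parts. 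Each subsequent application of Corollary~\ref{cor:equiv2} replaces a $[\phi^{\lambda}]$ by the basis--ready $[\Phi^{\lambda}_{Q}]$ at the cost of the strictly smaller terms $-\sum_{\mu\prec\lambda}c(\lambda,\mu)[\phi^{\mu}]$. Since the matrix $\big(c(\lambda,\mu)\big)$ is unitriangular for the partial order by size, this is exactly the inversion of that matrix, so the final coefficient of $[\Phi^{\lambda}_{Q}]$ is $E(\lambda)=\sum_{\nu}C_1(\nu)\,(c^{-1})_{\nu\lambda}$. The conjecture is then equivalent to the purely combinatorial assertion that $E(\lambda)=0$ unless $\lambda$ is the \emph{on--axis} partition $\lambda_{x}$ obtained from $\mb{m}$ by moving a single part $\mb{m}_i$ down to $\mb{m}_i-x$, for some $1\le x\le\mb{m}_i$.

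The engine for computing $E$ is a generating function. Encode a length $r$ partition $\nu$ (zeros allowed) by the monomial $m_{\nu}=\prod_{v}x_{v}^{c_{v}(\nu)}$, where $c_{v}(\nu)$ is the number of parts of $\nu$ equal to $v$. The product formula for $c(\mb{m},\lambda)$ underlying Proposition~\ref{prop:coef} shows that Corollary~\ref{cor:equiv2} is realized by the algebra homomorphism $\psi\colon x_{v}\mapsto x_{v}+x_{v+1}$, since $\psi(m_{\nu})=\prod_{v}(x_{v}+x_{v+1})^{c_{v}(\nu)}=\sum_{\mu}c(\mu,\nu)\,m_{\mu}$. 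Consequently $\psi$ is invertible with telescoping inverse $\psi^{-1}\colon x_{v}\mapsto\sum_{k\ge 0}(-1)^{k}x_{v+k}$, so the signed chain sums defining $E$ collapse to a closed form. Writing $w=\mb{m}_i-1$ and $\hat{\mb{m}}=\mb{m}$ with the part $\mb{m}_i$ deleted, the factor $\ell(\nu)=1+c_{w}(\hat{\nu})$ lets me assemble the first--degeneration data into
\[
S:=\sum_{\nu}C_1(\nu)\,m_{\nu}=\big(x_{w}+x_{w}^{2}\,\partial_{x_{w}}\big)\,T,\qquad T:=\sum_{\hat{\nu}}c(\hat{\mb{m}},\hat{\nu})\,m_{\hat{\nu}}=\psi^{\mathsf{T}}(m_{\hat{\mb{m}}}),
\]
the derivation term recording the multiplicity $\ell$.

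With this in hand the target becomes clean. Because $m_{\lambda_{x}}=x_{\mb{m}_i-x}\,m_{\hat{\mb{m}}}$, and setting $E_{x}:=E(\lambda_{x})$ and $\mc{E}:=\sum_{\lambda}E(\lambda)m_{\lambda}$, the conjecture says precisely that
\[
\mc{E}=\big(\psi^{\mathsf{T}}\big)^{-1}S=\big(\psi^{\mathsf{T}}\big)^{-1}\big(x_{w}+x_{w}^{2}\partial_{x_{w}}\big)\psi^{\mathsf{T}}(m_{\hat{\mb{m}}})
\]
is equal to $m_{\hat{\mb{m}}}$ times a linear combination of the single variables $x_{v}$ with $v\le w$, the term $(\mb{m}_i+1)[\Theta^{O,\mb{m}}_{L,i}]$ having been split off at the start. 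I would prove this by computing the conjugated operator $\big(\psi^{\mathsf{T}}\big)^{-1}\big(x_{w}+x_{w}^{2}\partial_{x_{w}}\big)\psi^{\mathsf{T}}$ in closed form and checking that, applied to $m_{\hat{\mb{m}}}$, it only ever alters the single value block at $w$; equivalently, pairing with the multiplicative $\psi^{-1}$, one shows $\langle S,\psi^{-1}m_{\lambda}\rangle=0$ for every off--axis $\lambda$. The $E_{x}$ produced this way should be an explicit alternating binomial expression specializing to $E_{1}=2$, $E_{2}=-2$ (with the separate $\Theta^{O}$--coefficient $\mb{m}_i+1=3$) in the worked case $\mb{m}=(2,1,0)$, $i=1$ of Section~\ref{sec:example}.

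The main obstacle is exactly the interaction between the multiplicity factor $\ell$ (the term $x_{w}^{2}\partial_{x_{w}}$) and the alternating telescoping of $\psi^{-1}$ when several parts of $\mb{m}$ take values near $w=\mb{m}_i-1$. A naive sign--reversing involution on descending chains fails here, because the reordering built into the definition of $c(\mb{m},\lambda)$ matches chains carrying different $\ell$--weights, so the cancellation is not term by term. The route I expect to work exploits that $\psi$ acts on each value block by an independent binomial convolution, so the required identity $\sum_{x=1}^{\mb{m}_i}E_{x}\,c(\lambda_{x},\mu)=C_1(\mu)$ should decouple value by value into a product of one--dimensional binomial identities; solving the resulting triangular recursion for the $E_{x}$ and controlling the signs and the $\ell$--weighting is where I anticipate the real work, and the likely reason the statement has so far resisted a proof.
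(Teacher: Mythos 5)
First, a point of order: the paper does not prove this statement. It is stated and left open as Conjecture~\ref{conj:pieri} (together with its combinatorial reformulation via the DAG at the end of Section~\ref{sec:remarks}), so there is no proof of the paper's to compare yours against; the only question is whether your argument closes the conjecture on its own. Your reduction is correct as far as it goes, and it checks out against both worked examples: the Type~C output really is the matrix inversion $E=(C^{\mathsf{T}})^{-1}C_{1}$ of the unitriangular matrix $C_{\mu\nu}=c(\mu,\nu)$; the encoding $m_{\nu}=\prod_{v}x_{v}^{c_{v}(\nu)}$ does turn Corollary~\ref{cor:equiv2} into the substitution $\psi\colon x_{v}\mapsto x_{v}+x_{v+1}$ with telescoping inverse; and the identity $S=(x_{w}+x_{w}^{2}\partial_{x_{w}})\psi^{\mathsf{T}}(m_{\hat{\mb{m}}})$ holds (for $\mb{m}=(2,1,1)$, $i=1$ one gets $S=3x_{1}^{3}+2x_{1}^{2}x_{0}+x_{1}x_{0}^{2}$ and $\mc{E}=x_{1}^{2}(3x_{1}-x_{0})$, exactly matching the path sums in Figure~\ref{fig:DAG}). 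This is a genuinely useful repackaging of the DAG formulation.

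The gap is that the proof stops exactly where the conjecture lives. The entire content of the statement is the vanishing of $E(\lambda)$ for every $\lambda$ that is not of the form $\lambda_{x}$, equivalently that the conjugated operator $(\psi^{\mathsf{T}})^{-1}(x_{w}+x_{w}^{2}\partial_{x_{w}})\psi^{\mathsf{T}}$ sends $m_{\hat{\mb{m}}}$ into $m_{\hat{\mb{m}}}$ times the span of the single variables $x_{v}$ --- and this you explicitly defer (``I would prove this by computing\ldots'', ``where I anticipate the real work''). The proposed escape route of decoupling ``value by value'' is not available for free: $\psi$ couples adjacent values, $\hat{\mb{m}}$ may have several parts equal to $w$ or $w+1$, and $\psi^{\mathsf{T}}$ is not an algebra homomorphism (already $\psi^{\mathsf{T}}(x_{1}^{2})=x_{1}^{2}+x_{1}x_{0}+x_{0}^{2}\neq(x_{1}+x_{0})^{2}$ because of the reordering built into $c(\mb{m},\lambda)$), so the derivation term cannot be conjugated part by part and the ``product of one--dimensional binomial identities'' does not simply factor. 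Until that operator identity is actually established, what you have is a clean and plausibly productive reformulation of the conjecture, not a proof of it.
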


As a remark, the conjecture can be phrased completely combinatorially. Beginning with $\mb{m}$, form a directed acyclic graph (DAG) as follows. The root node is labeled $\mb{m}$ and has an edge to a node labeled with any partition $\lambda$ in $\Sigma_j^i$ (see the beginning of Section \ref{sec:degenerations}). The weight $w(e)$ of an edge $e$ is the multiplicity of the corresponding component as given in Proposition \ref{prop:coef}. For each of the new nodes, add an edge to a node labeled with any partition obtained from it by subtracting one from any number of its entries. The weight of one of these edges is $c(\lambda,\lambda^\prime)$ where the edge goes from $\lambda$ to $\lambda^\prime$. Repeat this process for every new node. The weight $w(p)$ of any path $p = \set{e_1,\ldots,e_n}$ from the root node $\mb{m}$ to any node $\lambda$ in the DAG is
$$w(p) = (-1)^{n+1} w(e_1) \cdots w(e_n).$$
\begin{conj}
Let $\lambda$ be any node which is not obtained from $\mb{m}$ by subtracting an integer $0 \leq x \leq \mb{m}_i$ from $\mb{m}_i$. If $S$ is the set of all paths from $\mb{m}$ to $\lambda$, then
$$\sum_{p \in S} w(p) = 0.$$
\end{conj}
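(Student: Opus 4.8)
The plan is to recast the entire weighted DAG as a single linear-algebra identity over the polynomial ring $\ZZ[x_0,x_1,x_2,\dots]$, where the desired cancellation collapses to the triviality $\Psi^{-1}\Psi=\mathrm{id}$ for an explicit substitution $\Psi$. I encode each length-$r$ partition $\mu$ (zeros allowed) as the degree-$r$ monomial $x^\mu=\prod_{v\ge 0}x_v^{n_v(\mu)}$, where $n_v(\mu)$ is the number of parts of $\mu$ equal to $v$. The central observation is that the second-degeneration operator $T(\mu):=\sum_{\mu'}c(\mu,\mu')\,\mu'$ (the content of Corollary \ref{cor:equiv2}, with coefficients computed in Proposition \ref{prop:coef}) is induced by the ring endomorphism $\Psi$ of $\ZZ[x_0,x_1,\dots]$ given by $\Psi(x_v)=x_v+x_{v-1}$, with the convention $x_{-1}=0$. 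Indeed, decrementing a subset of the parts of $\mu$ amounts to choosing, for each value $v$, how many of the $n_v(\mu)$ parts equal to $v$ are lowered to $v-1$; this decrement vector is uniquely recovered from $\mu$ and $\mu'$, so $c(\mu,\mu')$ is exactly the coefficient of $x^{\mu'}$ in $\prod_v(x_v+x_{v-1})^{n_v(\mu)}=\Psi(x^\mu)$. Thus $T=\Psi$ on the degree-$r$ piece.

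Next I run the bookkeeping of the algorithm. Writing $v_0$ for the $\phi$-part of the class produced by the first degeneration (Corollary \ref{cor:equiv1}, setting aside its $\Theta^O$ summand, which is resolved separately), each resolution step replaces a live monomial $\mu$ by its kept term $\Phi^\mu$ together with the new live term $(\mathrm{id}-T)(\mu)=-\sum_{\mu'\ne\mu}c(\mu,\mu')\,\mu'$, whose summands all satisfy $|\mu'|<|\mu|$ for $|\mu|=\sum_j\mu_j$. Hence $R:=\mathrm{id}-T$ is strictly size-lowering, so it is nilpotent on the finite-dimensional span of the partitions involved, and the total coefficient with which $\Phi^\lambda$ is kept is $[x^\lambda]\sum_{n\ge 0}R^n v_0=[x^\lambda](\mathrm{id}-R)^{-1}v_0=[x^\lambda]\,T^{-1}(v_0)$. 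Matching signs shows that a path of length $n$ contributes $(-1)^{n-1}=(-1)^{n+1}$ times the product of its edge weights, so this kept coefficient is precisely $\sum_{p\in S}w(p)$. The conjecture therefore reduces to the single assertion that $T^{-1}(v_0)$ is supported on the special partitions.

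Finally I identify the source $v_0$. The first-degeneration weights $c_j^\lambda=\ell\cdot c(\hat{\mb{m}},\hat\lambda)$ of Proposition \ref{prop:coef} are exactly those obtained by marking one part of $\mb{m}$ equal to $\mb{m}_i$, lowering it compulsorily to $\mb{m}_i-1$, and lowering an arbitrary subset of the remaining parts; the factor $\ell$ records the choice of which part of $\lambda$ equal to $\mb{m}_i-1$ was the marked one. In monomials this reads $v_0=x_{\mb{m}_i-1}\cdot\Psi(x^{\hat{\mb{m}}})$, where $\hat{\mb{m}}$ is $\mb{m}$ with one part equal to $\mb{m}_i$ deleted. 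Since $\Psi^{-1}$ is again a ring homomorphism, with $\Psi^{-1}(x_v)=\sum_{k\ge 0}(-1)^k x_{v-k}$, we obtain
\[
T^{-1}(v_0)=\Psi^{-1}(x_{\mb{m}_i-1})\cdot x^{\hat{\mb{m}}}=\Bigl(\sum_{k=0}^{\mb{m}_i-1}(-1)^k x_{\mb{m}_i-1-k}\Bigr)x^{\hat{\mb{m}}}.
\]
Every monomial on the right is $x^{\hat{\mb{m}}}$ times a single variable $x_{\mb{m}_i-1-k}$, i.e. the partition $\mb{m}$ with its $i$-th part replaced by $\mb{m}_i-(k+1)$ for $0\le k\le\mb{m}_i-1$ — exactly the special partitions of the conjecture. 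Hence $[x^\lambda]\,T^{-1}(v_0)=0$ for every non-special $\lambda$, which is the claim.

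The main obstacle is the first step together with the source computation: one must verify carefully, with zeros and repeated parts allowed, that the two geometric degenerations of Section \ref{sec:degenerations} are governed by the single substitution $\Psi$ (respectively its marked variant), and in particular that the multiplicity $\ell$ and the mandatory decrement at position $i$ are precisely what is needed to factor $v_0$ as $x_{\mb{m}_i-1}\cdot\Psi(x^{\hat{\mb{m}}})$. Once these identifications are in place, the cancellation is the formal identity $\Psi^{-1}\Psi=\mathrm{id}$, and no estimates or case analysis remain.
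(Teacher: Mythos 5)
First, note that the paper offers no proof of this statement --- it is stated as a conjecture in Section \ref{sec:remarks} --- so your proposal is an attempt at a genuinely new result and there is nothing on the paper's side to compare it to. Your overall architecture (recasting the path sum as $[x^\lambda]\,T^{-1}(v_0)$ for a nilpotent perturbation of the identity, then inverting a substitution) is sound, and the reduction of $\sum_{p\in S}w(p)$ to the kept coefficient $[x^\lambda]\,T^{-1}(v_0)$ is a correct piece of formal bookkeeping. But the central identification $T=\Psi$ is false, and it contradicts the paper's own examples. The paper's $c(\mb{m},\lambda)$ counts ways to \emph{add} boxes to positions of $\lambda$ to recover $\mb{m}$, whereas the coefficient of $x^{\mu'}$ in $\prod_v(x_v+x_{v-1})^{n_v(\mu)}$ counts ways to \emph{remove} boxes from positions of $\mu$; these differ as soon as there are repeated parts, because the binomial factors are $\binom{n_{v-1}(\mu')}{d_v}$ in the first case and $\binom{n_v(\mu)}{d_v}$ in the second. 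Concretely, Figure \ref{fig:counting} gives $c((2,1,0),(1,1,0))=2$, while $[x_0x_1^2]\,\Psi(x_0x_1x_2)=[x_0x_1^2]\,x_0(x_1+x_0)(x_2+x_1)=1$. The claimed factorization $v_0=x_{\mb{m}_i-1}\Psi(x^{\hat{\mb{m}}})$ fails for the same reason: for $\mb{m}=(2,1,1)$, $i=1$, the edge to $(1,1,1)$ in Figure \ref{fig:DAG} has weight $3$, but $[x_1^3]\,x_1(x_1+x_0)^2=1$. So as written, two of your three key identifications are wrong, and the proof does not stand.

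The good news is that the errors are correlated and the strategy is repairable. Setting $h(\mu)=\prod_v n_v(\mu)!$ and letting $H$ be the diagonal operator $\mu\mapsto h(\mu)\mu$, one checks the double-counting identity $c(\mu,\mu')\,h(\mu)=\bigl([x^{\mu'}]\Psi(x^\mu)\bigr)h(\mu')$, i.e.\ $T=H\Psi H^{-1}$; and using $h(\hat\lambda)=h(\lambda)/\ell$ together with the same identity for $c(\hat{\mb{m}},\hat\lambda)$, one gets $H^{-1}v_0=\tfrac{1}{h(\hat{\mb{m}})}\,x_{\mb{m}_i-1}\Psi(x^{\hat{\mb{m}}})$. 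Then $T^{-1}v_0=H\Psi^{-1}H^{-1}v_0=\tfrac{1}{h(\hat{\mb{m}})}H\bigl(\Psi^{-1}(x_{\mb{m}_i-1})\,x^{\hat{\mb{m}}}\bigr)$, and since $H$ is diagonal it preserves supports, so the conclusion that only the special partitions occur survives (for $\mb{m}=(2,1,1)$, $i=1$ this gives $3x_1^3-x_0x_1^2$, matching the path sums $+3$ to $(1,1,1)$ and $2-3=-1$ to $(1,1,0)$ in Figure \ref{fig:DAG}). You must supply the conjugation identity and the corrected computation of $H^{-1}v_0$ explicitly; without them the argument as submitted proves nothing, because the ``formal identity $\Psi^{-1}\Psi=\mathrm{id}$'' is being applied to the wrong operator and the wrong source vector.
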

We end with the example in Figure \ref{fig:DAG} for $\mb{m} = (2,1,1)$ and $i=1$. In particular, note that the sum of the weights of all paths to $(0,0,0)$ and $(1,0,0)$ are zero. 

\begin{figure}[H]
\centering
\input{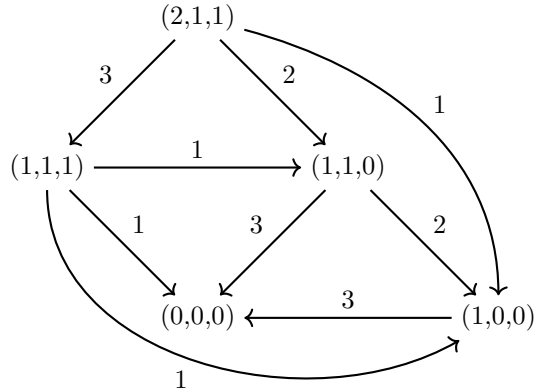}
\caption{The DAG for $\mb{m} = (2,1,1)$ with $i = 1$.}
\label{fig:DAG}
\end{figure}

\bibliographystyle{plain}
\bibliography{cites}
\end{document}